  \theoremstyle{plain}
  \newtheorem{assumption}{\protect\assumptionname}
  \theoremstyle{definition}
  \newtheorem{defn}{\protect\definitionname}
\theoremstyle{plain}
\newtheorem{thm}{\protect\theoremname}
  \theoremstyle{remark}
  \newtheorem{rem}{\protect\remarkname}
 \theoremstyle{definition}
  \newtheorem{example}{\protect\examplename}
  \theoremstyle{plain}
  \newtheorem{lem}{\protect\lemmaname}
  \theoremstyle{plain}
  \newtheorem{cor}{\protect\corollaryname}
  \theoremstyle{plain}
  \newtheorem{prop}{\protect\propositionname}
  \theoremstyle{remark}
  \newtheorem{acknowledgement}{\protect\acknowledgementname}
  \providecommand{\acknowledgementname}{Acknowledgement}
  \providecommand{\assumptionname}{Assumption}
  \providecommand{\definitionname}{Definition}
  \providecommand{\examplename}{Example}
  \providecommand{\lemmaname}{Lemma}
  \providecommand{\propositionname}{Proposition}
  \providecommand{\remarkname}{Remark}
\providecommand{\corollaryname}{Corollary}
\providecommand{\theoremname}{Theorem}
\begin{document}

\title{root's barrier, viscosity solutions of obstacle problems and reflected
FBSDEs}

\author{Paul Gassiat }

\address{TU Berlin, Institut f\"ur Mathematik, Straße des 17. Juni 136, 10623 Berlin, Germany}

\email{gassiat@math.tu-berlin.de}

\author{Harald Oberhauser }

\address{Oxford-Man Institute, University of Oxford, Walton Well Rd, OX2 6ED
Oxford, United Kingdom}

\email{harald.oberhauser@oxford-man.ox.ac.uk}

\author{Gonçalo dos Reis}

\address{University of Edinburgh, School of Mathematics, Edinburgh, EH9 3JZ,
UK}

\email{G.dosReis@ed.ac.uk}

\keywords{Skorokhod embedding problem, Root barrier, reversed Root barrier, viscosity solutions of obstacle problems, reflected BSDEs.}
\begin{abstract}
We revisit work of Rost \cite{MR0445600}, Dupire \cite{Dupire2005} and Cox--Wang \cite{CoxWang2011} on connections between Root's solution of the Skorokhod embedding problem and obstacle problems. 
We develop an approach based on viscosity sub- and supersolutions and an accompanying comparison principle. 
This gives a complete characterization of (reversed) Root barriers and also leads to new proofs of the existence as well as the minimality of Root solutions by pure PDE methods. 
The approach is self-contained, constructive and also general enough to cover martingale diffusions with degenerate elliptic or time-dependent volatility; it also provides insights about the dynamics of general Skorokhod embeddings by identifying them as supersolutions of certain nonlinear PDEs.
\end{abstract}
\maketitle

\section{Introduction }

This article revisits the dynamics of the Skorokhod embedding problem
from a viscosity PDE perspective with an emphasis on Root's solution.
That is, under mild assumptions on the probability measures
$\mu,\nu$ on $\mathbb{R}$ and the volatility coefficient $\sigma$,
we are interested in finding a (non-randomized) stopping time $\tau$
such that
\begin{equation}
\tag{{SEP}}\left\{ \begin{array}{rcl}
dX_{t} & = & \sigma\left(t,X_{t}\right)dB_{t},\, X_{0}\sim\mu,\\
X_{\tau} & \sim & \nu\text{ and }X^{\tau}=\left(X_{t\wedge\tau}\right)_{t\geq0}\text{ is uniformly integrable.}
\end{array}\right.\label{eq:sep}
\end{equation}
For general background on (\ref{eq:sep}) and its applications we
refer to Hobson and Ob\l{}ój \cite{MR2068476,Hobson2011}. In the
Brownian case, $\sigma\equiv1$ and $\mu=\delta_{0}$, Root showed
in 1969 \cite{MR0238394} that the stopping time $\tau$ can be realized
as the first hitting of a closed time-space set, the so-called Root
barrier, 
\[
R\subset\left[0,\infty\right]\times\left[-\infty,\infty\right]
\]
by the time-space process $t\mapsto\left(t,X_{t}\right)$. Further important
developments are due to Rost: firstly, he showed that Root's solution
minimizes 
\begin{equation}
\mathbb{E}\left[\int_{t\wedge\tau}^{\tau}f\left(X_s\right)ds\right]\,\,\,\forall f\geq0,\forall t\geq0\label{eq:second moment}
\end{equation}
among all solutions of (\ref{eq:sep}), \cite{MR0445600}; secondly,
he gave a new existence proof of Root's barrier by using potential
theory that generalizes to time-homogenous Markov processes \cite{MR0445600,MR0273691};
thirdly, he showed that there exist another barrier $R^\text{rev}$ that
solves (\ref{eq:sep}) and minimizes the diffusion of $X$, \cite{MR0356248,MR0346920}.
Another important contribution concerning the uniqueness of the barrier
$R$ was made by Loynes \cite{MR0292170}.

Already for Brownian motion it was not known how to construct $R$
except for a handful of simple cases. A completely new perspective
that led to a revived interest in (\ref{eq:sep}) came from the management
of risk in mathematical finance. It was started by work of Hobson
\cite{hobson1998robust} that showed how model-independent bounds
of exotic options can be obtained by ``extremal solutions'' of (\ref{eq:sep}).
Motivated by this, Dupire \cite{Dupire2005} showed formally that
the barrier $R$ is naturally linked to a nonlinear PDE that allows
to solve for $R$. This was further developed by Cox--Wang \cite{CoxWang2011}
who use a variational formulation, as developed in the 1970's by Bensoussan--Lions
et.~al.~\cite{bensoussan1982applications}, to calculcate $R$ in
case its existence is guaranteed by these classic results of Root and Rost. 
More recently, Gassiat--Mijatovic--Oberhauser \cite{gassiat2013integral} studied
the barrier via integral equations, Cox--Peskir \cite{cox2012embedding}
studied reversed barriers, Beiglböck--Cox--Huesmann \cite{beiglboeck2013optimal}
develop an optimal transport perspective of (\ref{eq:sep}) and Kleptsyn--Kurtzmann \cite{kleptsyn2012counter} use Root's barrier to construct a counter-example to the Cantelli conjecture; there
are also many more developments beyond the context of Root type solutions,
see for example \cite{2013arXiv1306.3942A,AnkirchnerHeyneImkeller2008,ankirchner2011skorokhod,galichon2011stochastic,henry2012maximum,obloj2013iterated}
for interesting recent progress.

This article takes the PDE approach further by giving a self-contained approach to the embedding problem based on viscosity solutions. 
The parabolic comparison principle plays the key role.
It allows us to provide new proofs of firstly, the existence of a Root solution, and secondly, its minimizing property (\ref{eq:second moment}). 
In the Brownian, or homogenous diffusion case, this recovers the classic results of Root and Rost \cite{MR0238394,MR0445600} by constructive methods and provides insights about the general dynamics of (\ref{eq:sep}); 
in the time-inhomogenous case, already the existence and minimality results themselves are new to the best of our knowledge and would
be hard to obtain otherwise, since the classic approaches break down%
\footnote{The most general existence proof is due to Rost \cite{MR0445600}
and relies heavily on time-homogenity (and to certain degree transience)
of the underlying process, thereby excluding (\ref{eq:sep}) for time-dependent
$\sigma=\sigma\left(t,x\right)$.%
}; however in the current setup they become simple consequences of
a PDE existence and a comparison of sub- and supersolutions; see Theorem
\ref{thm:supersolution}, Theorem \ref{thm:root as pde} and Theorem
\ref{thm:root as minimizer}. 
Moreover, the PDE methods we introduce also cover Rost's reversed Root barriers, where already for the Brownian case, they might be an attractive, constructive(!) alternative to the classic ``filling scheme'' \cite{chaconThesis,MR0346920} proof that relies on deep results from potential theory.

\subsection*{Structure of the article.}

In Section \ref{sec:Root's-barrier-and} we introduce notation and
our assumptions on $\left(\sigma,\mu,\nu\right)$. We then give our
first main result, Theorem \ref{thm:supersolution}, which states
that\emph{ any} \emph{solution} of the Skorokhod embedding problem
(\ref{eq:sep}) is a viscosity supersolution of a certain obstacle
PDE. If one thinks in potential theoretic terms, this can be seen
as the PDE version of Rost's approach \cite{MR0445600} to (\ref{eq:sep})
via excessive functions of Markov processes.

In Section \ref{sec:Root's solution} we introduce an extension of
Loyne's notion of regular Root barriers which allows to deduce the
uniqueness of such barrier solutions. We then present our second main
result, Theorem \ref{thm:root as pde}, which shows a one-on-one correspondence
of regular Root barriers and viscosity solutions. This complete characterization
allows us to firstly, prove the existence of Root barriers via PDE
existence, Corollary \ref{cor:existence root}, and secondly, show
that the minimizing property (\ref{eq:second moment}) is a simple
consequence of a parabolic comparison result, Theorem \ref{sec:Application-I:-calculating}.
We also briefly revisit and leverage results about reflected FBSDEs
\cite{ElKarouiPengQuenez1997} which allows to use Monte-Carlo methods to calculate barriers and gives another interpretation as a stopping problem. 
We conclude this section by showing, how our approach also gives existence and minimalty proofs of Rost's reversed barriers.

In Section \ref{sec:Application-I:-calculating} we implement numerical
schemes to solve for the barrier and apply the Barles--Souganidis
method to get convergence (+rates of convergence) which might be useful
for practitioners in financial mathematics (bounds on options on variance).

\section{\label{sec:Root's-barrier-and}Skorokhod embeddings as supersolutions}

\subsection{Notation and Assumptions}

Denote with $\left(\Omega,\mathcal{F},\left(\mathcal{F}_{t}\right),\mathbb{P}\right)$
a filtered probability space that satisfies the usual conditions and
carries a standard Brownian motion $B$ and a real-valued random variable
$X_{0}\sim\mu$ that is independent of $B$. We work under the following
assumption on $\left(\sigma,\mu,\nu\right)$.
\begin{assumption}
\label{as:main assumption}$\mu$ and $\nu$ are measures on $\left(\mathbb{R},\mathcal{B}\left(\mathbb{R}\right)\right)$
that have a first moment and are in convex order, i.e.~
\[
u_{\nu}\left(x\right):=-\int_{\mathbb{R}}\left|x-y\right|\nu\left(dy\right)\leq-\int_{\mathbb{R}}\left|x-y\right|\mu\left(dy\right)=:u_{\mu}\left(x\right)\,\,\forall x\in\mathbb{R}.
\]
Let $\sigma\in C\left(\left[0,\infty\right)\times\mathbb{R},\mathbb{R}\right)$
be Lipschitz in space and of linear growth, both uniformly in time,
that is 
\begin{equation}
\sigma_{Lip}:=\sup_{t\in\left[0,\infty\right)}\sup_{x\neq y}\frac{\left|\sigma\left(t,x\right)-\sigma\left(t,y\right)\right|}{\left|x-y\right|}<\infty\text{ \,\ and\,\ }\sigma_{LG}:=\sup_{t\in\left[0,\infty\right)}\sup_{x\in\mathbb{R}}\frac{\left|\sigma\left(t,x\right)\right|}{1+\left|x\right|}<\infty.\label{eq:sigma-conditions-1}
\end{equation}
Further assume local ellipticity in the sense that for each compact $K \subset \left\{ x:u_{\mu}\left(x\right)\neq u_{\nu}\left(x\right)\right\}$, there exists some $c_K>0$ s.t. $$\sigma(t,x) \geq c_K >0, \;\;\;\; \forall t \geq 0, x \in K.$$
\end{assumption}
The need for above assumptions is intuitively clear: convex order
of $\mu,\nu$ is necessary by classic work of Chacon and Kellerer
about the marginals of martingales \cite{kellerer1972markov,chacon1976one};
however we only assume first moments which is already for the Brownian
case, $\sigma\equiv 1$, weaker than Root's assumption \cite{MR0238394}. 
Linear growth and Lipschitz property of $\sigma$ are natural since we describe the evolution of the law of the strong solution $X^{\tau}$ by PDEs; some nondegeneracy of the diffusion is clearly required to be able to transport the mass $\mu$ to $\nu$.
(Note that what we call ``local ellipticity'' covers degenerate elliptic diffusions, e.g.~for geometric Brownian motion, $\sigma(x)=x$ our assumption is fulfilled if and only if the support of $\nu$ is contained in the positive halfline which
is in this case the sharp condition).%
\begin{defn}
Let $\left(\sigma,\mu,\nu\right)$ fulfill Assumption \ref{as:main assumption}.
We denote with $\operatorname{\text{SEP}_{\sigma,\mu,\nu}}$ the set
of \emph{$\left(\mathcal{F}_{t}\right)$}-stopping times $\tau$ that
solve the Skorokhod embedding problem 
\[
\left\{ \begin{array}{rcl}
dX_{t} & = & \sigma\left(t,X_{t}\right)dB_{t},\, X_{0}\sim\mu,\\
X_{\tau} & \sim & \nu\text{ and }X^{\tau}=\left(X_{t\wedge\tau}\right)_{t\geq0}\text{ is uniformly integrable.}
\end{array}\right.
\]

\end{defn}

\subsection{Recall on viscosity theory}

We briefly recall standard concepts from viscosity theory; for more
details see \cite{CrandallIshiiLions1992,MR2179357FS}.
\begin{defn}
\label{def:Jets}Let $\mathcal{O}$ be a locally compact subset of
$\mathbb{R}$ and denote $\mathcal{O}_{T}=\left(0,T\right)\times\mathcal{O}$
and $\overline{\mathcal{O}}_{T}=\left[0,T\right]\times\mathcal{O}$
for a given $T\in\left(0,\infty\right]$. Consider a function $u:\mathcal{O}_{T}\rightarrow\mathbb{R}$
and define for $\left(s,z\right)\in\mathcal{O}_{T}$ the parabolic
superjet $\mathcal{P}_{\mathcal{O}}^{2,+}u\left(s,z\right)$ as the
set of triples $\left(a,p,m\right)\in\mathbb{R}\times\mathbb{R}\times\mathbb{R}$
which fulfill
\begin{eqnarray}
u\left(t,x\right) & \leq & u\left(s,z\right)+a\left(t-s\right)+\left\langle p,x-z\right\rangle \nonumber \\
 &  & +\frac{1}{2}\left\langle m\left(x-z\right),x-z\right\rangle +o\left(\left|t-s\right|+\left|x-z\right|^{2}\right)\text{ as }\mathcal{O}_{T}\ni\left(t,x\right)\rightarrow\left(s,z\right)\label{eq:superjetcondition}
\end{eqnarray}
Similarly we define the parabolic subjet $\mathcal{P}_{\mathcal{O}}^{2,-}u\left(s,z\right)$
such that $\mathcal{P}_{\mathcal{O}}^{2,-}u=-\mathcal{P}_{\mathcal{O}}^{2,+}\left(-u\right)$. 
\begin{defn}
~

\label{def-super-sub-viscsol}A function $F:\mathcal{O}_{T}\times\mathbb{R}\times\mathbb{R}\times\mathbb{R}\times\mathbb{R}\rightarrow\mathbb{R}$
is \emph{proper} if $\forall\left(t,x,a,p\right)\in\mathcal{O}_{T}\times\mathbb{R}\times\mathbb{R}$
\[
F\left(t,x,r,a,p,m\right)\leq F\left(t,x,s,a,p,m^{\prime}\right)\,\,\forall m\geq m^{\prime},\, s\geq r.
\]
Denote the real-valued, upper semicontinuous functions on $\overline{\mathcal{O}}_{T}$
with\emph{ $USC\left(\overline{\mathcal{O}}_{T}\right)$} and the
lower semicontinuous functions with $LSC\left(\overline{\mathcal{O}}_{T}\right)$.
A \textit{subsolution} of the (\emph{forward} problem)
\begin{equation}
\left\{ \begin{array}{rcl}
F\left(t,x,u,\partial_{t}u,Du,D^{2}u\right) & = & 0\\
u\left(0,.\right) & = & u_{0}\left(.\right)
\end{array}\right.\label{eq:nonlinear pde}
\end{equation}
is a function $u\in USC\left(\overline{\mathcal{O}}_{T}\right)$ such
that 
\begin{eqnarray*}
F\left(t,x,u,a,p,m\right) & \leq & 0\text{ for }\left(t,x\right)\in\mathcal{O}_{T}\text{ and }\left(a,p,m\right)\in\mathcal{P}_{\mathcal{O}}^{2,+}u\left(t,x\right)\\
u\left(0,.\right) & \leq & u_{0}\left(.\right)\text{ on }\mathcal{O}
\end{eqnarray*}
The definition of a \textit{supersolution} follows by replacing $USC\left(\overline{\mathcal{O}}_{T}\right)$
by $LSC\left(\overline{\mathcal{O}}_{T}\right)$, $\mathcal{P}_{\mathcal{O}}^{2,+}$
by $\mathcal{P}_{\mathcal{O}}^{2,-}$ and $\leq$ by $\geq.$ If $u$
is a supersolution of (\ref{eq:nonlinear pde}) then we also say that
$F\left(t,x,u,\partial_{t}u,Du,D^{2}u\right)\geq0,\, u\left(0,x\right)\geq u_{0}\left(x\right)$
holds in viscosity sense (similar for subsolutions). Similarly we
call a function $v$ a viscosity (\emph{sub-},\emph{super-}) \emph{solution}
of the \emph{backward} problem 
\begin{equation}
\left\{ \begin{array}{rcl}
G\left(t,x,v,\partial_{t}v,Dv,D^{2}v\right) & = & 0\\
v\left(T,.\right) & = & v_{T}\left(.\right)
\end{array}\right.\label{eq:nonlinear pde-1}
\end{equation}
if
\begin{eqnarray*}
G\left(t,x,v,a,p,m\right) & \leq & 0\text{ for }\left(t,x\right)\in\mathcal{O}_{T}\text{ and }\left(a,p,m\right)\in\mathcal{P}_{\mathcal{O}}^{2,+}v\left(t,x\right)\\
v\left(0,.\right) & \leq & v_{T}\left(.\right)\text{ on }\mathcal{O}.
\end{eqnarray*}

\end{defn}
\end{defn}

\subsection{Skorokhod embeddings as PDE supersolutions}

Chacon \cite{MR0501374} showed that for $\tau\in\operatorname{SEP}_{\sigma,\mu,\nu}$,
the potential function $u^{\tau}\left(t,x\right)\equiv-\mathbb{E}\left[\left|X_{t\wedge\tau}-x\right|\right]$
is a powerful tool to study the evolution of the law of the stopped
(local) martingale $X^{\tau}=\left(X_{\tau\wedge t}\right)$. Theorem
\ref{thm:supersolution} captures the following intuition: $u^{\tau}$
is a concave function of $X^{\tau}$, hence we expect it to be a supersolution
(in some sense) of a Fokker--Planck equation. However, $u^{\tau}$
cannot be smooth for generic Skorokhod solutions due to kinks from
stopping at $u_{\nu}$. Further, since $u^{\tau}$ is the potential
of the occupation measure of $X^{\tau}$ and $\tau\in\operatorname{SEP}_{\sigma,\mu,\nu}$,
it follows that $u^{\tau}\left(t,.\right)$ is bounded below by the
potential of the measure $\nu$ and will converge to it as $t\rightarrow\infty$.
We now make this rigorous under the generality of Assumption \ref{as:main assumption}.
\begin{thm}
\label{thm:supersolution}Let $\left(\sigma,\mu,\nu\right)$ fulfill
Assumption \ref{as:main assumption}. Then for every $\tau\in$$\operatorname{SEP}_{\sigma,\mu,\nu}$
the function 
\[
u^{\tau}\left(t,x\right):=-\mathbb{E}\left[\left|X_{\tau\wedge t}-x\right|\right]
\]
is a viscosity supersolution of

\textup{
\begin{equation}
\left\{ \begin{array}{rcl}
\inf\left(u-u_{\nu},\partial_{t}u-\frac{\sigma^{2}}{2}\Delta u\right) & = & 0\text{ on }\left(0,\infty\right)\times\mathbb{R},\\
u\left(0,.\right) & = & u_{\mu}\left(.\right)
\end{array}\right.\label{eq:obs super}
\end{equation}
}and 
\[
\lim_{t\rightarrow\infty}u^{\tau}\left(t,x\right)=u_{\nu}\left(x\right)\,\,\,\forall x\in\mathbb{R}.
\]
\end{thm}
\begin{proof}
We have to show that 
\[
\left\{ \begin{array}{rcl}
u^{\tau}-u_{\nu} & \geq & 0\text{ on }\left(0,\infty\right)\times\mathbb{R},\\
\left(\partial_{t}-\frac{\sigma^{2}}{2}\Delta\right)u^{\tau} & \geq & 0\text{ on }\left(0,\infty\right)\times\mathbb{R},\\
\lim_{t\rightarrow\infty}u^{\tau} & = & u_{\nu}.
\end{array}\right.
\]
The first inequality follows immediately via conditional Jensen 
\begin{equation}
u^{\tau}\left(t,x\right)=-\mathbb{E}\left[\left|X_{t}^{\tau}-x\right|\right]\geq\mathbb{E}\left[\mathbb{E}\left[-\left|X_{\tau}-x\right|\lvert\mathcal{F}_{t\wedge\tau}\right]\right]=-\mathbb{E}\left[\left|X_{\tau}-x\right|\right]=u_{\nu}\left(x\right).\label{eq:lower bound}
\end{equation}
and the last one is immediate from properties of potential functions,
see \cite{MR0501374,MR2068476}. For the second inequality we approximate
$u^{\tau}$ by a sequence of regularizations $\left(u^{n}\right)_{n}$.
We show that each $u^{n}$ is a supersolution of a ''perturbed version''
of (\ref{eq:obs super}) and we conclude by sending $n\rightarrow\infty$
and using the stability of viscosity solutions.

\textbf{Step 1}\textbf{\emph{.}} Convergence of\textbf{\emph{ $u^{n}\left(t,x\right):=\mathbb{E}\left[\psi^{n}\left(X_{t}^{\tau}-x\right)\right]$}}
as $n\rightarrow\infty.$\\
Define the sequence $\left(\psi_{n}\right)\subset C^{2}\left(\mathbb{R},\mathbb{R}\right)$
as 
\[
\psi_{n}\left(x\right)=\int_{0}^{x}\int_{0}^{y}n\phi\left(nz\right)dzdy\forall x\in\mathbb{R}
\]
where $\phi$ is the usual Gaussian scaled to the unit disc 
\[
\phi\left(x\right)=\begin{cases}
\exp\left(-\frac{1}{1-x^{2}}\right) & \text{for }\left|x\right|<1,\\
0 & \text{otherwise}.
\end{cases}
\]
Especially note that $\psi_{n}\left(.\right)\rightarrow\left|.\right|$
uniformly, $\Delta\psi_{n}\left(.\right)$ is continuous, $0\leq\Delta\psi_{n}\leq O\left(n\right)$
and $\operatorname{supp}\left(\Delta\psi_{n}\right)\subset\left[-\frac{1}{n},\frac{1}{n}\right]$
(we could replace $\psi^{n}$ by any other sequence with this properties).
Further introduce 
\begin{eqnarray*}
u^{n}\left(t,x\right) & = & -\mathbb{E}\left[\psi_{n}\left(X_{t}^{\tau}-x\right)\right],\\
u_{\nu}^{n}\left(x\right) & = & -\int_{\mathbb{R}}\psi^{n}\left(y-x\right)\nu\left(dy\right),\\
u_{\mu}^{n}\left(x\right) & = & -\int_{\mathbb{R}}\psi^{n}\left(y-x\right)\mu\left(dy\right).
\end{eqnarray*}
Since $\psi_{n}\left(.\right)\rightarrow\left|.\right|$ uniformly
we have $\mathbb{P}$-a.s. 
\[
\left|\psi^{n}\left(X_{.}^{\tau}-.\right)-\left|X_{.}^{\tau}-.\right|\right|_{\infty;\left[0,\infty\right)\times\mathbb{R}}=\sup_{\left(t,x\right)\in\left[0,\infty\right)\times\mathbb{R}}\left|\psi^{n}\left(X_{t}^{\tau}-x\right)-\left|X_{t}^{\tau}-x\right|\right|\rightarrow_{n\rightarrow\infty}0
\]
hence we get uniform convergence of $u^{n},u_{\mu}^{n}$ and $u_{\nu}^{n}$,
i.e.
\[
\left|u^{n}-u\right|_{\infty;[0,\infty)\times\mathbb{R}}\rightarrow_{n}0,\qquad\left|u_{\nu}^{n}-u_{\nu}\right|_{\infty,\mathbb{R}}\rightarrow_{n}0,\qquad\text{{and}}\qquad\left|u_{\mu}^{n}-u_{\mu}\right|_{\infty,\mathbb{R}}\rightarrow_{n}0.
\]
Further, by the definition of $u$ and $u^{n}$ we see that $\forall x\in\mathbb{R}$,
$\forall n\in\mathbb{N}$ 
\begin{equation}
\lim_{t\rightarrow\infty}u\left(t,x\right)=u_{\nu}\left(x\right)\text{ and }\lim_{t\rightarrow\infty}u^{n}\left(t,x\right)=u_{\nu}^{n}\left(x\right).\label{eq:lim u to U}
\end{equation}

\textbf{Step 2. }$\left(\partial_{t}-\frac{\sigma^{2}}{2}\Delta\right)u\geq0$
on $\left(0,\infty\right)\times\mathbb{R}$. \\
We now fix $x\in\mathbb{R}$ and apply the Itô formula to $-\psi^{n}\left(\cdot-x\right)$
and the local martingale $X^{\tau}$ which, after taking expectations
and using Fubini, leads to the expression 
\[
u^{n}\left(t,x\right)=u_{\mu}^{n}\left(x\right)-\int_{0}^{t}\mathbb{E}\left[\frac{\sigma^{2}\left(r,X_{r}\right)}{2}\Delta\psi^{n}\left(X_{r}-x\right)1_{r<\tau}\right]dr.
\]
It follows that $u^{n}\left(t,x\right)$ has a right- and left- derivative
$\forall\left(t,x\right)\in\left(0,\infty\right)\times\mathbb{R}$;
to see this take 
\begin{eqnarray*}
\partial_{t+}u^{n}\left(t,x\right) & = & \lim_{\epsilon\downarrow0}\epsilon^{-1}\left(u\left(t+\epsilon,x\right)-u\left(t,x\right)\right)\\
 & = & -\mathbb{E}\left[\lim_{\epsilon\downarrow0}\epsilon^{-1}\int_{t}^{t+\epsilon}\frac{\sigma^{2}\left(r,X_{r}\right)}{2}\Delta\psi^{n}\left(X_{r}-x\right)1_{r<\tau}dr\right]\\
 & = & -\mathbb{E}\left[\frac{\sigma^{2}\left(t,X_{t}\right)}{2}\Delta\psi^{n}\left(X_{t}-x\right)1_{t<\tau}\right]
\end{eqnarray*}
and similarly it follows that the left derivative $\partial_{t-}u^{n}$
is given as
\begin{equation}
\partial_{t-}u^{n}\left(t,x\right)=-\mathbb{E}\left[\frac{\sigma^{2}\left(t,X_{t}\right)}{2}\Delta\psi^{n}\left(X_{t}-x\right)1_{t\leq\tau}\right].\label{eq:u_t-}
\end{equation}
Note that for every $t\in\left(0,\infty\right)$ $\partial_{t-}u^{n}\left(t,.\right),\partial_{t+}u^{n}\left(t,.\right)\in C^{\infty}\left(\mathbb{R},\mathbb{R}\right)$;
further, since $\frac{\sigma^{2}\left(t,X_{t}\right)}{2}\Delta\psi^{n}\left(X_{t}-x\right)$
is non-negative we conclude 
\[
\partial_{t-}u^{n}\left(t,x\right)\leq\partial_{t+}u^{n}\left(t,x\right)\leq0.
\]
From the definition of $u^{n}$ it follows that we can exchange differentiation
in space and expectation to arrive at 
\begin{equation}
\frac{\sigma^{2}\left(t,x\right)}{2}\Delta u^{n}\left(t,x\right)=-\frac{\sigma^{2}\left(t,x\right)}{2}\mathbb{E}\left[\Delta\psi^{n}\left(X_{t}^{\tau}-x\right)\right]\leq0\text{ on }\left(0,\infty\right)\times\mathbb{R},\label{eq:u_xx negative}
\end{equation}
which is a continuous function in $\left(t,x\right)$. Lemma \ref{lem:jets for u-1}
shows that $\forall\left(a,p,m\right)\in\mathcal{P}_{\mathcal{O}}^{2,-}u^{n}\left(t,x\right)$
\[
a\geq\partial_{t-}u^{n}\left(t,x\right)\text{ and }m\leq\Delta u^{n}\left(t,x\right).
\]
Hence, by (\ref{eq:u_xx negative}) and (\ref{eq:u_t-}) 
\begin{eqnarray*}
a-m & \geq & \partial_{t-}u^{n}\left(t,x\right)-\frac{\sigma^{2}\left(t,x\right)}{2}\Delta u^{n}\left(t,x\right)\\
 & = & \frac{1}{2}\mathbb{E}\left[\sigma^{2}\left(t,x\right)\Delta\psi^{n}\left(X_{t}^{\tau}-x\right)-\sigma^{2}\left(t,X_{t}\right)\Delta\psi^{n}\left(X_{t}-x\right)1_{t\leq\tau}\right].
\end{eqnarray*}
Splitting the term inside the expectation gives 
\begin{eqnarray*}
\partial_{t-}u^{\epsilon}\left(t,x\right)-m & \geq & \frac{1}{2}\mathbb{E}\left[\sigma^{2}\left(t,x\right)\Delta\psi^{n}\left(X_{t}^{\tau}-x\right)-\sigma^{2}\left(t,x\right)\Delta\psi^{n}\left(X_{t}-x\right)1_{t\leq\tau}\right]\\
 &  & +\frac{1}{2}\mathbb{E}\left[\sigma^{2}\left(t,x\right)\Delta\psi^{n}\left(X_{t}-x\right)1_{t\leq\tau}-\sigma^{2}\left(t,X_{t}\right)\Delta\psi^{n}\left(X_{t}-x\right)1_{t\leq\tau}\right]\\
 & =: & \frac{1}{2}I_{n}\left(t,x\right)+\frac{1}{2}II_{n}\left(t,x\right).
\end{eqnarray*}
We conclude that $u^{n}$ is a supersolution of $\left(\partial_{t}-\frac{\sigma^{2}}{2}\Delta\right)u-\frac{1}{2}\left(I_{n}+II_{n}\right)=0$
on $\left(0,\infty\right)\times\mathbb{R}$. Further, 
\begin{eqnarray*}
I_{n}\left(t,x\right) & = & \mathbb{E}\left[\sigma^{2}\left(t,x\right)\Delta\psi^{n}\left(X_{t}^{\tau}-x\right)-\sigma^{2}\left(t,x\right)\Delta\psi^{n}\left(X_{t}-x\right)1_{t\le\tau}\right]\\
 & = & \sigma^{2}\left(t,x\right)\mathbb{E}\left[\Delta\psi^{n}\left(X_{\tau}-x\right)1_{t>\tau}\right]\geq0
\end{eqnarray*}
hence $u^{n}$ is also a viscosity supersolution of
\[
\left\{ \begin{array}{rcl}
\partial_{t}u-\frac{\sigma^{2}}{2}\Delta u-\frac{1}{2}II_{n} & = & 0\text{ on }\left(0,\infty\right)\times\mathbb{R}\\
u\left(0,.\right) & = & u_{\mu}^{n}\left(.\right).
\end{array}\right.
\]
Using the Lipschitz property of $\sigma$, $\operatorname{supp}\left(\Delta\psi^{n}\right)=\left[-n^{-1},n^{-1}\right]$
and that $\left|\Delta\psi^{n}\right|_{\infty}\leq c\, n$ we estimate
\begin{eqnarray*}
\left|II_{n}\left(t,x\right)\right| & \leq & \mathbb{E}\left[\left|\sigma^{2}\left(t,x\right)-\sigma^{2}\left(t,X_{t}\right)\right|\Delta\psi^{n}\left(X_{t}-x\right)1_{t\leq\tau}\right]\\
 & \leq & \sigma_{Lip}\frac{2}{n}2\sigma_{LG}\left|1+x+\frac{2}{n}\right|\mathbb{E}\left[\Delta\psi^{n}\left(X_{t}-x\right)1_{t\leq\tau}\right]\\
 & \leq & 4\sigma_{Lip}\sigma_{LG}\left|x+\frac{2}{n}\right|\frac{1}{n}\mathbb{E}\left[cn1_{\left|X_{t}-x\right|\leq n^{-1}}\right]\\
 & = & 4c\sigma_{Lip}\sigma_{LG}\left|x+\frac{2}{n}\right|\mathbb{P}\left(\left|X_{t}-x\right|\leq n^{-1}\right)
\end{eqnarray*}
($\sigma_{Lip}$ and $\sigma_{LG}$ as defined in (\ref{eq:sigma-conditions-1});
for the second inequality we use the trivial estimate 
\[
\left|\sigma^{2}\left(t,x\right)-\sigma^{2}\left(t,X_{t}\right)\right|\leq\left|\sigma\left(t,x\right)-\sigma\left(t,X_{t}\right)\right|\left|\sigma\left(t,x\right)+\sigma\left(t,X_{t}\right)\right|
\]
combined with $\left|x-X_{t}\right|\leq\frac{2}{n}$, Lipschitzness
and linear growth of $\sigma$). Now for every compact $K\subset\left[0,\infty\right)\times\mathbb{R}$
we have 
\[
\lim_{n\rightarrow\infty}\sup_{\left(t,x\right)\in K}\mathbb{P}\left[\left|X_{t}-x\right|\leq n^{-1}\right]=0
\]
since our Assumption \ref{as:main assumption} guarantees (via \cite[Theorem 2.3.1]{Nu06})
that the process $X$ has a density $f\left(t,.\right)$ for all $t>0$
with respect to Lebesgue measure and
\[
\mathbb{P}\left[\left|X_{t}-x\right|\leq n^{-1}\right]=\int_{x-\frac{1}{n}}^{x+\frac{1}{n}}\left|y-x\right|f\left(t,y\right)dy\leq\frac{1}{n}\int_{\mathbb{R}}f\left(t,y\right)dy\rightarrow0\text{ as }n\rightarrow\infty
\]
uniformly in $\left(t,x\right)$, therefore $II_{n}\rightarrow0$
locally uniformly on $\left(0,\infty\right)\times\mathbb{R}$. By
step 1, $u^{n}\rightarrow u$ and $u^{n}\left(0,.\right)\rightarrow u_{\mu}\left(.\right)$
locally uniformly as $n\rightarrow\infty$. The usual stability of
viscosity solutions, see \cite{MR1118699UserGuide}, implies that
$u$ is a viscosity supersolution of
\begin{equation}
\left\{ \begin{array}{rcl}
\left(\partial_{t}-\frac{\sigma^{2}}{2}\Delta\right)u & = & 0\text{ on }\left(0,\infty\right)\times\mathbb{R}\\
u\left(0,.\right) & = & u_{\mu}\left(.\right)
\end{array}\right.\label{eq:fokkerplanck}
\end{equation}
which proves the desired inequality.\end{proof}
\begin{rem}
$\tau\in\operatorname{SEP}_{\sigma,\mu,\nu}$ can be a complicated
(even randomized) functional of $X$. While for some solutions, it
is known that $\tau$ is connected to an optimal stopping problem
(see Section (\ref{sub:bsde})), in general one can not expect every
$\tau\in\operatorname{SEP}_{\sigma,\mu,\nu}$ to have a minimizing/extremal
property (see Section \ref{sub:Root as minimizer}).
\end{rem}

\section{\label{sec:Root's solution}Root's solution}

In principle, a solution of the Skorokhod embedding problem, $\tau\in\operatorname{SEP}_{\sigma,\mu,\nu}$,
could be a complicated functional of the trajectories of $X$. Root
\cite{MR0238394} showed that the arguably simplest class of stopping
times, namely the hitting times of ``nice'' subsets in time-space,
so-called Root barriers, is big enough to solve Skorokhod's embedding
problem for Brownian motion. We now give a complete characterization
of such barriers as free boundaries of PDEs.

\subsection{Root barriers}
\begin{defn}
\label{def: root barrier}A closed subset $R$ of $\left[0,\infty\right]\times\left[-\infty,\infty\right]$
is a \emph{Root barrier} $R$ if
\begin{enumerate}
\item $\left(t,x\right)\in R$ implies $\left(t+r,x\right)\in R$ $\forall r\geq0$,
\item $\left(+\infty,x\right)\in R$ $\forall x\in\left[-\infty,\infty\right]$,
\item $\left(t,\pm\infty\right)\in R$ $\forall t\in\left[0,+\infty\right]$.
\end{enumerate}
We denote by $\mathcal{R}$ the set of all Root barriers $R$. Given
$R\in\mathcal{R}$, its \emph{barrier function} $f_{R}:\left[-\infty,\infty\right]\rightarrow\left[0,\infty\right]$
is defined as
\[
f_{R}\left(x\right):=\inf\left\{ t\geq0:\left(t,x\right)\in R\right\} ,\,\,\, x\in\left[-\infty,\infty\right].
\]

\end{defn}
Barrier functions have several nice properties such as being lower
semi-continuous and that $\left(f_{R}\left(x\right),x\right)\in R$
for any $x\in\mathbb{R}$, see \cite[Proposition 3]{MR0292170}.

\subsection{Uniqueness of regular Root barriers}

Different Root barriers can solve the same Skorokhod embedding problem%
\footnote{Let $\left(\sigma,\mu,\nu\right)=\left(1,\delta_{0},\frac{1}{2}\left(\delta_{-1}+\delta_{1}\right)\right)$,
then $R=\left[0,\infty\right]\times\left[1,\infty\right]\bigcup\left[0,\infty\right]\times\left[-\infty,-1\right]$
and any other Root barrier $R^{\prime}$ with a barrier function that
coincides with $f_{R}$ on $\left[-1,1\right]$ solves $\operatorname{SEP}_{\sigma,\mu,\nu}$.%
}. This problem of non-uniqueness was resolved in the Brownian case,
$\sigma\equiv1,\mu=\delta_{0}$, by Loynes \cite[p215]{MR0292170}
in 1970 by introducing the notion of regular Root barriers. 
\begin{defn}
\label{def:loynes-regular-barrier}$R\in\mathcal{R}$ resp.~its barrier
function $f_{R}$ is\emph{ Loynes--regular} if $f_{R}$ vanishes outside
the interval $\left[x_{-}^{R},x_{+}^{R}\right]$, where $x_{+}^{R}$
and $x_{-}^{R}$ are the first positive resp.~first negative zeros
of $f_{R}$. Given $Q,R\in\mathcal{R}$ we say that $Q,R$ are \emph{Loynes-equivalent
}if $f_{Q}=f_{R}$ on $\left[x_{-}^{Q},x_{+}^{Q}\right]$ and%
\footnote{If $Q,R$ are Loynes-equivalent then $x_{+}^{R}=x_{+}^{Q}$ and $x_{-}^{R}=x_{-}^{Q}$.%
} $\left[x_{-}^{R},x_{+}^{R}\right]$.
\end{defn}
Loynes showed that if a Root barrier solves the embedding problem
then there also exists a unique Loynes-regular barrier that solves
(\ref{eq:sep}). However, Loynes' notion of regularity is tailor-made
to the case of Dirac starting measures.
\begin{example}
\label{example-loynesinsuficient}Let $\mu=\frac{1}{2}\left(\delta_{2}+\delta_{-2}\right)$
and $\nu=\frac{1}{4}\left(\delta_{3}+\delta_{1}+\delta_{-1}+\delta_{-3}\right)$.
By symmetry properties of Brownian motion, for $a=b=0$ the barrier
\[
R_{a,b}=\left[0,\infty\right]\times\left[3,\infty\right]\cup\left[a,\infty\right]\times\left\{ 1\right\} \cup\left[b,\infty\right]\times\left\{ -1\right\} \cup\left[0,\infty\right]\times\left[-\infty,-3\right]\cup\left\{ +\infty\right\} \times\left[-\infty,+\infty\right]
\]
solves $\operatorname{SEP}_{1,\mu,\nu}$, as does 
\[
R=\left[0,\infty\right]\times\left[3,\infty\right]\cup\left[0,\infty\right]\times\left[1,-1\right]\cup\left[0,\infty\right]\times\left[-\infty,-3\right]\cup\left\{ +\infty\right\} \times\left[-\infty,+\infty\right].
\]
However, neither is Loynes-regular and there cannot exist a Loynes-regular
barrier%
\footnote{If $R_{a,b}$ solves $\operatorname{SEP}_{1,\mu,\nu}$ then $a,b>0$
otherwise it would not be Loynes regular; now note that $R_{0,0}$
solves $\operatorname{SEP}_{1,\mu,\nu}$, hence every other $R_{a,b}$
that puts under $\nu$ more mass on $3$ than the required $\frac{1}{4}$
since the geometry of $R_{a,b}$ implies that only more trajectories
can hit the line $\left[0,\infty\right]\times\left\{ 3\right\} $
than in the case $a=b=0$; further, every solution must coincide with
$R_{0,0}$ on $\left[0,\infty\right]\times\left[1,3\right]\cup\left[0,\infty\right]\times\left[-3,-1\right]$.%
}.
\end{example}
Motivated by the above we introduce the notion of $\left(\mu,\nu\right)$-\emph{regular}
barriers.
\begin{defn}
\label{def-nu-mu-regular}Define
\[
N^{\mu,\nu}:=\left\{ x\in\mathbb{R}:u_{\mu}\left(x\right)=u_{\nu}\left(x\right)\right\} \cup\left\{ \pm\infty\right\} \text{ and }\mathcal{N}^{\mu,\nu}:=\left[0,+\infty\right]\times N^{\mu,\nu}.
\]
We call a Root barrier $R$ $\left(\mu,\nu\right)$\emph{-regular}
if $R=R\cup\mathcal{N}^{\mu,\nu}$ {[}or equivalently if $f_{R}\left(x\right)=0$
$\forall x\in N^{\mu,\nu}${]} and denote with $\mathcal{R}_{\mu,\nu}$
the subset of Root barriers $\mathcal{R}$ that are $\left(\mu,\nu\right)$-regular.
Further, two Root barriers $R,Q$ are said to be $\left(\mu,\nu\right)$\emph{-equivalent}
if%
\footnote{We denote with $\overline{A}$ the closure and with $A^{o}$ the interior
of a given set $A$. %
} $R\setminus\left(\left[0,\infty\right]\times\left(N^{\mu,\nu}\right)^{o}\right)=Q\setminus\left(\left[0,\infty\right]\times\left(N^{\mu,\nu}\right)^{o}\right)$
{[}or equivalently if $f_{R}\left(x\right)=f_{Q}\left(x\right)$ $\forall x\in\overline{\left(N^{\mu,\nu}\right){}^{c}}${]}.
\end{defn}
We first show that in the case of Brownian motion started at a Dirac
in $0$, the above notion of regularity coincides with Loynes regularity.
We then show that for every Root barrier that solves $\operatorname{SEP}_{\sigma,\mu,\nu}$
there exist a unique $\left(\mu,\nu\right)$-regular barrier that
solves the same embedding.
\begin{lem}
\label{lem:equiv-defs-for-barrier-reg}Let $\sigma\equiv1$ and $\nu$
fulfill $\int x\nu\left(dx\right)=0$ and $R\in\mathcal{R}$. Then
$R$ is Loynes-regular iff $R$ is $\left(\delta_{0},\nu\right)$-regular.\end{lem}
\begin{proof}
If $R$ is Lyones regular then one has that $\nu\left(\left[x_{-}^{R},x_{+}^{R}\right]\right)=1$
for $x_{-}^{R},x_{+}^{R}$ from Definition \ref{def:loynes-regular-barrier}.
This and the continuity of the potential functions mean that $u_{\delta_{0}}\left(x\right)=u_{\nu}\left(x\right)$
for $x\notin\left(x_{-}^{R},x_{+}^{R}\right)$ and hence $R$ is $\left(\delta_{0},\nu\right)$-regular.
For the inverse direction, just remark that by definition of $\left(\delta_{0},\nu\right)$-regularity
and the convex order relation one has $N^{\delta_{0},\nu}\cap\mathbb{R}=\mathbb{R}\setminus\left(a,b\right)$
for some $a<0<b$, in other words $f_{R}\left(x\right)=0$ for any
$x\notin\left(a,b\right)$. Using convex ordering again yields that
$a$ and $b$ are the first negative resp. positive zero of $f_{R}$
. Hence $R$ is Loynes-regular.\end{proof}
\begin{lem}
\label{thm:barrieruniqueness}Let $\left(\sigma,\mu,\nu\right)$ fulfill
Assumption \ref{as:main assumption} and assume that there exists
$Q\in\mathcal{R}$ such that $\tau^{Q}$ solves $\operatorname{SEP}_{\sigma,\mu,\nu}$.
Then there also exists unique $\left(\mu,\nu\right)$-regular barrier
$R\in\mathcal{R}_{\mu,\nu}$ such that $\tau_{R}$ solves $\operatorname{SEP}_{\sigma,\mu,\nu}$.
\end{lem}
\begin{proof}
To see that $Q$ is $\left(\mu,\nu\right)$-equivalent to a $\left(\mu,\nu\right)$-regular
barrier just note that since $u_{\nu}\leq u_{\mu}$ (and we embed
by assumption) the continuous time-space process $\left(t\wedge\tau_{Q},X_{t\wedge\tau_{Q}}\right)$
does not enter $\left[0,\infty\right]\times N^{\mu,\nu}$, hence $R:=Q\cup\mathcal{N}^{\mu,\nu}$
is a also an element of $\mathcal{R}_{\mu,\nu}$ that solves $\operatorname{SEP}_{\sigma,\mu,\nu}$.

Suppose there are two $\left(\mu,\nu\right)$-regular barriers $B,C$,
each embedding $\nu$ (via $X$) with u.i.~stopping times $\tau_{B}$
and $\tau_{C}$ respectively. Then $\Gamma=B\cup C$ also embeds $\nu$
with the u.i. stopping time $\gamma=\min\left\{ \tau_{B},\tau_{C}\right\} $,
this statement is a straightforward extension of \cite[Proposition 4]{MR0292170}
to our setting (the proof only requires continuity of the paths).
Furthermore, since $\tau_{B}$ and $\tau_{C}$ are u.i. they are minimal
(see \cite[Proposition 2.2.2 (p23)]{Obloj2012}) then $\gamma$ is
also minimal, this in turn implies that $\gamma=\tau_{B}=\tau_{C}$.
It remains to show that $B,C$ and $B\cup C$ are the same (outside
$\mathcal{N}^{\mu,\nu}$ since in $\mathcal{N}^{\mu,\nu}$ this must
hold) one argues as in the proof of \cite[Lemma 2 (p215)]{MR0292170}
by proving that if $B\neq\Gamma$ then also $\tau_{B}\neq\tau_{\gamma}$. \end{proof}

\subsection{Root's solution as a free boundary\label{sub:Root's-solution-asfreeboudnary}}

We have already seen in Theorem \ref{thm:supersolution} that every
solution of (\ref{eq:sep}) gives rise to a supersolution of an obstacle
PDE. The theorem below gives a complete characterization of (regular)
Root solutions. 
\begin{thm}
\label{thm:root as pde}Let $\left(\sigma,\mu,\nu\right)$ fulfill
Assumption \ref{as:main assumption}. Then the following are equivalent:
\begin{enumerate}
\item there exists $R\in\mathcal{R}_{\mu,\nu}$ such that $\tau^{R}=\inf\left\{ t>0:\left(t,X_{t}\right)\in R\right\} \in\operatorname{SEP}_{\sigma,\mu,\nu}$,
\item there exists a viscosity solution $u\in C\left(\left[0,\infty\right],\left[-\infty,\infty\right]\right)$
decreasing (in time) of\textup{
\begin{equation}
\left\{ \begin{array}{rcl}
\min\left(u-u_{\nu},\partial_{t}u-\frac{\sigma^{2}}{2}\Delta u\right) & = & 0\text{ on }\left(0,\infty\right)\times\mathbb{R},\\
u\left(0,.\right) & = & u_{\mu}\left(.\right),\\
u\left(\infty,.\right) & = & u_{\nu}\left(.\right).
\end{array}\right.\label{eq:obsPDE}
\end{equation}
}
\end{enumerate}
\textup{\emph{Moreover,}}\textup{ 
\begin{equation}
R=\left\{ \left(t,x\right)\in\left[0,\infty\right]\times\left[-\infty,\infty\right]:u\left(t,x\right)=u_{\nu}\left(x\right)\right\} \text{ and }u\left(t,x\right)=-\mathbb{E}\left[\left|X_{\tau^{R}\wedge t}-x\right|\right].\label{eq:representations R and u}
\end{equation}
}\end{thm}
\begin{proof}
We first show that \textbf{(i) implies (ii)}: that is we have to show
that the function
\begin{equation}
u\left(t,x\right):=-\mathbb{E}\left[\left|X_{\tau^{R}\wedge t}-x\right|\right]\label{eq:rep-1}
\end{equation}
(identified with its limit $u_{\mu}$ resp.~$u_{\nu}$ as $t\rightarrow0$
resp. $\infty$) fulfills in viscosity sense
\begin{equation}
\left\{ \begin{array}{rcl}
u-u_{\nu} & \geq & 0\text{ on }\left(0,\infty\right)\times\mathbb{R},\\
\left(\partial_{t}-\frac{\sigma^{2}}{2}\Delta\right)u & \geq & 0\text{ on }\left(0,\infty\right)\times\mathbb{R},\\
u-u_{\nu} & = & 0\text{ on }R,\\
\left(\partial_{t}-\frac{\sigma^{2}}{2}\Delta\right)u & = & 0\text{ on }R^{c}.
\end{array}\right.\label{eq:inequalities}
\end{equation}
The first and second line in (\ref{eq:inequalities}) follow from
Theorem \ref{thm:supersolution}. To see that $u-u_{\nu}=0\text{ on }R$,
note that by Tanaka's formula 
\[
u\left(t,x\right)=u_{\mu}\left(x\right)-\mathbb{E}\left[L_{t\wedge\tau^{R}}^{x}\right]\,\,\,\forall\left(t,x\right)
\]
and letting $t\rightarrow\infty$ gives 
\[
u_{\nu}\left(x\right)=u_{\mu}\left(x\right)-\mathbb{E}\left[L_{\tau^{R}}^{x}\right].
\]
Subtracting from the above yields
\[
u\left(t,x\right)-u_{\nu}\left(x\right)=\mathbb{E}\left[L_{\tau^{R}}^{x}-L_{t\wedge\tau^{R}}^{x}\right]
\]
and therefore it is sufficient to show that $L_{\tau^{R}}^{x}-L_{t\wedge\tau^{R}}^{x}=0$
for $\left(t,x\right)\in R$, $\mathbb{P}$-a.s. To see this simply
write 
\[
L_{\tau^{R}}^{x}-L_{t\wedge\tau^{R}}^{x}=\left(L_{\tau^{R}}^{x}-L_{t}^{x}\right)1_{t<\tau^{R}}
\]
and note that the right hand side can only be strictly positive if
the process $\left(X_{s\wedge\tau^{R}}\right)_{s\geq t}$ crosses
the line $\left\{ \left(s,x\right):s\in\left[t,\tau^{R}\right]\right\} $.
However, since $R$ is a Root barrier and $\left(t,x\right)\in R$
we have that $\left\{ \left(s,x\right):s\in\left[t,\tau^{R}\right]\right\} \subset R$
and since $\tau^{R}$ is the first hitting time of $R$ this event
is a null event. It now only remains to show\textbf{ }
\[
\left(\partial_{t}-\frac{\sigma^{2}}{2}\Delta\right)u=0\text{ on }R^{c}.
\]
and to do this we argue again via stability as in Theorem
\ref{thm:supersolution}. Therefore define $u^{n}$,$u_{\mu}^{n}$
and $u_{\nu}^{n}$ as well as $I_{n}$ and $II_{n}$ exactly as in
Theorem \ref{thm:supersolution}. From Lemma \ref{lem:jets for u-1}
it follows that if $\partial_{t-}u^{n}\left(t,x\right)<\partial_{t+}u^{n}\left(t,x\right)$
then $\mathcal{P}_{\mathcal{O}}^{2,+}u^{n}\left(t,x\right)=\emptyset$
(in which case we are done) and if $\partial_{t}u^{n}\left(t,x\right)=\partial_{t-}u^{n}\left(t,x\right)=\partial_{t+}u^{n}\left(t,x\right)$
then $\forall\left(a,p,m\right)\in\mathcal{P}_{\mathcal{O}}^{2,+}u^{n}\left(t,x\right)$,
$a=\partial_{t}u^{n}\left(t,x\right)$ and $m\geq\Delta u^{n}\left(t,x\right)$.
Hence, in the latter case we have $\forall\left(a,p,m\right)\in\mathcal{P}_{\mathcal{O}}^{2,+}u^{n}\left(t,x\right)$
that 
\begin{eqnarray*}
a-m & \leq & \partial_{t}u^{n}\left(t,x\right)-\frac{\sigma^{2}\left(t,x\right)}{2}\Delta u^{n}\left(t,x\right).
\end{eqnarray*}
As in Theorem \ref{thm:supersolution}, we see that $u^{n}$ is a
subsolution of
\[
\left\{ \begin{array}{rcl}
\partial_{t}u-\frac{\sigma^{2}}{2}\Delta u-\frac{1}{2}\left(I_{n}+II_{n}\right) & = & 0\text{ on }\left(0,\infty\right)\times\mathbb{R},\\
u\left(0,.\right) & = & u_{\mu}^{n}\left(.\right).
\end{array}\right.
\]
In Theorem \ref{thm:supersolution} we have already shown that $II_{n}\rightarrow0$
locally uniformly as $n\rightarrow\infty$ and we now show that also
$I^{n}\rightarrow0$ locally uniformly on $R^{c}$: since $R$ is
a Root barrier we have
\[
\left(\tau_{R}+r,X_{\tau_{R}}\right)\in R\,\,\,\forall r\geq0,
\]
hence if $\left(t,x\right)\in R^{c}$ and $t\geq\tau_{R}$ then $X_{\tau_{R}}\neq x$.
Therefore 
\[
\lim_{n\rightarrow\infty}\sup_{\left(t,x\right)\in K}\Delta\psi^{n}\left(X_{\tau_{R}}-x\right)1_{t\geq\tau_{R}}=0\,\,\,\text{ for every compact }K\subset R^{c}
\]
which is enough to conclude that $I_{n}$ converges locally uniformly
on $R^{c}$ to $0$, i.e.~for every compact $K\subset R^{c}$ 
\[
\lim_{n}\sup_{\left(t,x\right)\in K}I_{n}\left(t,x\right)\leq\left|\sigma^{2}\right|_{\infty,K}\lim_{n}\mathbb{E}\left[\sup_{\left(t,x\right)\in K}\Delta\psi^{n}\left(X_{\tau_{R}}-x\right)1_{t\geq\tau_{R}}\right]=0.
\]
The stability results and the restatement for parabolic PDEs of Proposition
4.3, Lemma 6.1 and Remark 6.4 found in the User's guide \cite{MR1118699UserGuide}
imply that $u$ is a subsolution of

\[
\left\{ \begin{array}{rcl}
\left(\partial_{t}-\frac{\sigma^{2}}{2}\Delta\right)u & = & 0\text{ on }R^{c},\\
u\left(0,.\right) & = & u_{\mu}^{n}\left(.\right).
\end{array}\right.
\]
Putting the above together shows that $u$ is indeed a viscosity solution
of the obstacle problem (\ref{eq:obsPDE}). To see that $u$ is of
linear growth, recall that by the above $u\left(t,x\right)\in\left[u_{\mu}\left(x\right),u_{\nu}\left(x\right)\right]$,
hence $\left|u\left(t,x\right)\right|\leq\left|u_{\mu}\left(x\right)\right|+\left|u_{\nu}\left(x\right)\right|$.
Since $u_{\mu}$ and $u_{\nu}$ are of linear growth (see e.g.~\cite[Section 3.2]{Obloj2012},\cite[Proposition 2.1]{hirsch2012new},\cite[Proposition 4.1]{BeiglboeckJuillet2012})
we have shown that 
\begin{equation}
\sup_{\left(t,x\right)\in\left[0,\infty\right)\times\mathbb{R}}\frac{\left|u\left(t,x\right)\right|}{1+\left|x\right|}<\infty.\label{eq:linear growth}
\end{equation}
This allows to use our comparison result, Theorem \ref{thm:compforOSB},
to conclude that $u$ is not only a solution but the unique viscosity
solution of linear growth. Thus we have shown that (i) implies (ii)
and that the second equality in (\ref{eq:representations R and u})
holds.

We now show that \textbf{(ii) implies (i)}. First note that since $u_\nu \leq u \leq u_\mu$, $u$ has linear growth in space uniformly in time. Now set 
\[
R:=\left\{ \left(t,x\right):u\left(t,x\right)=u_{\nu}\left(x\right)\right\} 
\]
and write $R$ as $R=\bigcup_{t\geq0}\left\{ t\right\} \times R_{t}$ for closed sets $R_{t}\subset\left[0,\infty\right]$. 
Note that since $u$ is decreasing in time, $R$ is actually a barrier, and it is clearly $(\mu,\nu)$-regular. 
To see that the free boundary $R$ embeds $\nu$, we introduce $R^{-}:=\bigcup_{t\geq0}\left\{ t\right\} \times R_{t-}$ where $R_{t-}:=\bigcup_{s<t}R_{s}$ and denote with $\nu_{R}$ and $\nu_{R^-}$ the distributions of $X_{\tau^{R}}$ and $X_{\tau^{R^{-}}}$ and with $u_{\nu_R}$ and $u_{\nu_R^-}$ the potential functions. 
Since $R^-\subset R$, we already know that  $u_{\nu_{R^-}}\leq u_{\nu_R}$ and we now show that  
\begin{equation}
u_{\nu_{R^-}}\leq u_{\nu}\leq u_{\nu_R}.\label{eq:interval for potential}
\end{equation}
We then argue that the above have to be equalities which shows the desired embedding.
To this end, consider for each $\epsilon >0$ the shifted barriers
\[
R_{\epsilon}=\bigcup_{t\geq\epsilon} \left\{t \right\} \times R_{t-\epsilon}\text{ and }R^{\epsilon}=\bigcup_{t\geq0} \left\{t \right\} \times R_{t+\epsilon}
\]
and denote their corresponding hitting times by $t\mapsto\left(t,X_{t}\right)$
with $\tau^{\epsilon}$,$\tau_{\epsilon}$ and the corresponding potential
functions with $u^{\epsilon}\left(t,x\right):=-\mathbb{E}\left[\left|X_{\tau^{\epsilon}\wedge t}-x\right|\right]$,$u_{\epsilon}(t,x):=-\mathbb{E}\left[\left|X_{\tau_{\epsilon}\wedge t}-x\right|\right]$.
Note that 
\[
R_{\epsilon}\subset R^- \subset R\subset R^{\epsilon}.
\]
Let us first prove that 
\begin{equation}
\lim_{\epsilon \to 0} \tau_\epsilon = \tau_{R^-}, \;\;\; \lim_{\epsilon \to 0} \tau^\epsilon = \tau_{R}.
\label{eq:st}
\end{equation}
For the first equality, note that if $t \leq \tau_\epsilon$ then $f_R(X_s) > s -\epsilon$ for all $s \geq t$. Hence if $t \leq \inf{\epsilon} \tau_\epsilon$, then for all $s \leq t$, $f_R(X_s) \geq s$ for all $s \leq t$, i.e. $(s,X_s) \notin R^-$. It follows that $\lim_{\epsilon} \tau_\epsilon \leq \tau_{R^-}$, and the reverse inequality is obvious since $R_\epsilon \subset R^{-}$. For the second inequality, passing to the limit in the relation $f_R(X_{\tau^\epsilon}) \leq \tau^\epsilon + \epsilon$ and using lower-semicontinuity of $f_R$ yields $f_R(X_{\lim_\epsilon \tau^\epsilon}) \leq \lim_\epsilon \tau^\epsilon$, i.e. $\lim_\epsilon \tau^\epsilon \geq \tau_R$, and again the reverse inequality is obvious.

We now claim that
\begin{equation}
u_{\epsilon} \leq u \leq u^{\epsilon},
\label{eq:IneqEps}
\end{equation}
which by \eqref{eq:st} already shows (\ref{eq:interval for potential}). 

Let us prove the first inequality in \eqref{eq:IneqEps}. It will follow from a simple application of viscosity comparison. Indeed, let $v$ $=$ $u_\epsilon-u$, we now show that it satisfies in viscosity sense 
\begin{equation}
\label{eq:viscw}
\partial_t v - \left(\frac{\sigma^2}{2}\partial_{xx} v\right)_+ \leq 0.
\end{equation}
Indeed, let $(t,x)$ be in $R_\epsilon$. Then for all $s \geq t$, one has $u_\epsilon(s,x)=u_\epsilon(t,x)$ by the arguments from (i) $\Rightarrow$ (ii), whereas by definition of $R$, $u(s,x) = u(t,x)$ for all $s \in [t-\epsilon,\infty)$. Since in addition $u_\epsilon$ is nonincreasing in $t$, it follows that $\partial_{t+}v\left(t,x\right) \leq 0 = \partial_{t-}v\left(t,x\right)$. Hence by Lemma \ref{lem:jets for u-1}, $w$ satisfies $\partial_t w \leq 0$ in viscosity sense at $(t,x)$. And one has $(\partial_t - \frac{\sigma^2}{2}\partial_{xx})v = 0$ on $R^c$, again by respectively definition of $R$ and the arguments from (i) $\Rightarrow$ (ii). We have thus proved that on the whole space, one has $\min\left[ \partial_t v, \partial_t v - \frac{\sigma^2}{2}\partial_{xx} v\right] = \partial_t v - \left(\frac{\sigma^2}{2}\partial_{xx} v\right)_+\leq 0$, and therefore $v$ $\leq$ $0$ by comparison. The proof of the second inequality of \eqref{eq:IneqEps} is essentially the same.

This finishes the proof of (\ref{eq:interval for potential}). Now note that since one-point sets are regular for our one-dimensional diffusion $X$, one has $\tau_R = \tau_{R^-}$ a.s., so that the inequalities in (\ref{eq:interval for potential}) are actually equalities, which proves that the hitting time of $R$ embeds $\nu$.
To finish the proof, it remains to show that $X^{\tau_{R}}$ is uniformly integrable. But this is immediate since the family of laws $\left(\mathbb{P} \circ X_{t \wedge \tau_R}^{-1}\right)_{t\geq 0}$ is dominated in convex order by $\nu$, and is therefore u.i. by de La Vallee Poussin's theorem.
 \end{proof}

\begin{rem}
Work of Dupire and Cox--Wang \cite{Dupire2005,CoxWang2011} showed
that if classic existence results \cite{MR0238394,MR0445600} apply,
the barrier can be calculated via a PDE. What is new here is that
Theorem \ref{thm:root as pde} provides a complete characterization;
especially, it allows to the infer existence of a Root solution from
the existence of a PDE solution. This also applies to the time-inhomogenous
case where these classic approaches \cite{MR0238394,MR0445600} break down.
As we will see below, together with Theorem \ref{thm:supersolution}
it recovers and extends the minimizing property of barrier solutions.
\end{rem}
Theorem \ref{thm:root as pde} allows to infer the existence of a
Root solution for $\operatorname{SEP}_{\sigma,\mu,\nu}$ via the existence
of viscosity solutions for $\left(\sigma,\mu,\nu\right)$ in the 
full
generality of Assumption (\ref{as:main assumption}). 
\begin{cor}[Existence of Root solutions]
\label{cor:existence root}Let $\left(\sigma,\mu.\nu\right)$ fulfill
Assumption \ref{as:main assumption} 
. Then (i) resp.~(ii) in Theorem \ref{thm:root as pde}
hold. Especially, there exists a unique $R\in\mathcal{R}_{\mu,\nu}$
such that $\tau^{R}\in\operatorname{SEP}_{\sigma,\mu,\nu}$ and $R$
is the free boundary (\ref{eq:representations R and u}) of the obstacle
PDE (\ref{eq:obsPDE}). \end{cor}
\begin{proof}
Existence of a viscosity solution $u$ to
\begin{equation*}
\left\{ \begin{array}{rcl}
\min\left(u-u_{\nu},\partial_{t}u-\frac{\sigma^{2}}{2}\Delta u\right) & = & 0\text{ on }\left(0,\infty\right)\times\mathbb{R},\\
u\left(0,.\right) & = & u_{\mu}\left(.\right)
\end{array}\right.
\end{equation*}
 follows from standard results, for
example by penalization and Perron's method (see \cite{el1997reflected}). Hence it only remains to prove that the solution $u$ is decreasing in time, and satisfies $u(\infty,\cdot) = u_\nu$.

\underline{1) $u$ is decreasing in time :} 

We first prove it in the case where $\sigma=\sigma(x)$ does not depend on $t$. Define for $h>0$, the function $u^{h}\left(t,x\right):=u\left(t+h,x\right)$.
Since $u_{\mu}$ is concave, it is a supersolution of (\ref{eq:obsPDE}), and since 
$u^{h}\left(0,x\right)\equiv u\left(h,x\right)\leq u_{\mu}\left(x\right)$
and $u^{h}$ solves the very same PDE as $u$, it follows by comparison
that $u^{h}\left(t,x\right)\leq u\left(t,x\right)$. Note in addition that since $\frac{\sigma^2}{2} \partial_xx u \leq \partial_t u$ (in viscosity sense), the fact that $u$ is decreasing in $t$ is easily seen to imply that $u$ is concave in $x$. Now consider $\sigma$ piecewise-constant in $t$. Then by iterating the above argument, one gets that $u$ is decreasing in $t$ and concave in $x$. To obtain the general result, approximate the continuous function $\sigma(t,x)$ by a sequence $\sigma^\epsilon$ each of these being piecewise-constant in $t$. Then the corresponding solutions $u^\epsilon$ converge locally uniformly to $u$, which therefore has the same monotonicity and concavity properties. (Note that since the $\sigma^\epsilon$ are not continuous functions, one needs to use the existence/uniqueness/stability results for viscosity solutions with discontinuous coefficients, e.g.~\cite{MR1167420,MR2246335})

\underline{2) $u(\infty,\cdot) = u_\nu$ :}

Let $O=\left\{ u(\infty,\cdot) > u_\nu\right\}$, and assume that $(a,b) \subset O$. Then by local ellipticity $\sigma \geq c>0$ on $(a,b)$, and one has on $[0,\infty) \times (a,b)$, $(\partial_t - \frac{c^2}{2} \partial_{xx})u  \leq (\partial_t - \frac{\sigma^2}{2} \partial_{xx})u = 0$ (using concavity of $u$ for the first inequality). Hence for each $t \geq 0$, $u$ is dominated on $[t,\infty] \times [a,b]$ by the solution $v$ to
\begin{equation*}
\left\{ \begin{array}{ll}
	(\partial_t - \frac{c^2}{2} \partial_{xx}) v = 0 & \mbox{ on }(t,\infty) \times (a,b), \\
	v(t,x) = u(t,x), & \forall x \in (a,b) \\
	v(s,a) = u(t,a), \;\;\; v(s,b) = u(t,b),  & \forall s \in [t,\infty].
\end{array}
\right.
\end{equation*}
But by standard computations, $v(\infty,\cdot)$ is the linear interpolation between $u(t,a)$ and $u(t,b)$, so that by comparison and letting $t \to \infty$, we obtain
$$u(\infty, \delta a + (1-\delta) b) \leq \delta u(\infty, a) + (1-\delta) u(\infty,b), \;\; \forall \delta \in [0,1],$$
i.e. $u(\infty,\cdot)$ is convex on any connected component of $O$. Now let $(c,d)$ be a connected component of $O$. If $-\infty < c <d < \infty$, then one has $u(\infty,c) = u_\nu(c)$ and $u(\infty,d) = u_\nu(d)$. But since $u_\mu$ is concave, it must necessarily dominate the convex function $u(\infty,\cdot)$ on $(c,d)$, contradicting the fact that $(c,d) \subset O$. Similarly when $c$ or $d$ is infinite one gets a contradiction using $\lim_{x \to \infty} (u_\mu-u_\nu)(x) = 0$. Hence $O = \emptyset$, and $u(\infty,\cdot) \equiv u_\nu$.
\end{proof}
Moreover, the proof of Theorem \ref{thm:root as pde} also shows
regularity properties about $u$. They have a intuitive explanation
by their representation as potential functions so we record them as
a corollary.
\begin{cor}
Let $\left(\sigma,\mu.\nu\right)$ fulfill Assumption \ref{as:main assumption}.
Then the viscosity solution $u$ from Theorem \ref{thm:root as pde}
fulfills
\begin{enumerate}
\item for every $x\in\mathbb{R}$ $t\mapsto u\left(t,x\right)$ is non-increasing
and 
\[
u_{\nu}\left(x\right)\leq u\left(t,x\right)\leq u_{\mu}\left(x\right)\text{ \,\,}\forall\left(t,x\right),
\]

\item $u\lvert_{R}=u_{\nu}\lvert_{R}$,
\item $u$ is Lipschitz in space (uniformly in time), 
\[
\sup_{t\in\left[0,\infty\right)}\sup_{x\neq y}\frac{\left|u\left(t,x\right)-u\left(t,y\right)\right|}{\left|x-y\right|}<\infty.
\]

\end{enumerate}
\end{cor}

\subsection{Root's solution as a minimizer\label{sub:Root as minimizer}}

After Root \cite{MR0238394} proved the existence of a barrier solution
for the Brownian case, Rost \cite{MR0445600} used potential theoretic
methods to show that Root's solution minimizes the residual expectation
\[
\mathbb{E}\left[\tau-\tau\wedge t\right]=\int_{0}^{t}\mathbb{P}\left(\tau>s\right)ds\,\,\forall t\geq0.
\]
(As is well known from old work of Dinge \cite{dinges1974stopping},
minimizing above residual expectation implies that $\tau$ also minimizes
$\mathbb{E}\left[f\left(\tau\right)\right]$ for $f\geq0$, convex)
\footnote{Applied with $f\left(x\right)=x^{2}$, Rost thereby proved a conjecture
made earlier by Kiefer \cite{kiefer1972skorohod}, namely that Root's
solution minimizes the variance. This property is the one that makes
Root's solution give lower bounds on options on variance. Though strictly
speaking, Rost \cite{MR0445600} proved Kiefer's conjecture only for
measures with bounded support as pointed out by himself \cite["Technical Remark" at the bottom of page 3]{MR0445600}.%
}. The viscosity PDE characterization of Theorem \ref{thm:root as pde}
now allows to give a very short proof of the minimizing property of
the Root barrier via our parabolic comparision result, Thereom \ref{thm:compforOSB}.
It immediately covers the time-inhomogenous and degenerate elliptic
case (thereby generalizing Rost's approach \cite{MR0445600}) and
is already for the simple Brownian case, $\sigma\equiv1$ and $\mu=\delta_{0}$,
the shortest minimality proof that we are aware of.
\begin{thm}
\label{thm:root as minimizer}Let $\left(\sigma,\mu,\nu\right)$,
$u$ and $R$ be as in Theorem \ref{thm:root as pde}. 
\begin{enumerate}
\item The potential function of the Root solution is a minimizer, that is
\begin{equation}
u\left(t,x\right)=\operatorname{argmin}u^{\tau}\left(t,x\right)\,\,\forall\left(t,x\right)\in\left(0,\infty\right)\times\mathbb{R}\label{eq:u minimizes}
\end{equation}
where $u^{\tau}\left(t,x\right)\equiv-\mathbb{E}\left[\left|X_{\tau\wedge t}-x\right|\right]$.
\item If we additionally assume that $\mu,\nu$ have second moments, then
Root's solution minimizes the residual expectation, that is
\begin{eqnarray*}
\tau^{R} & = & \operatorname{argmin}\mathbb{E}\left[\int_{\tau\wedge t}^{\tau}\sigma^{2}\left(r,X_{r}\right)dr\right]\,\,\,\forall t\geq0
\end{eqnarray*}
where $\tau^{R}$ is the hitting time of $R$.
\end{enumerate}
In both statements above, $\operatorname{argmin}$ is taken over $\tau\in\operatorname{SEP}_{\sigma,\mu,\nu}$. \end{thm}
\begin{proof}
 From Theorem \ref{thm:supersolution} we know that every $u^{\tau}$
is a supersolution of the obstacle PDE (\ref{eq:obsPDE}) and from
Theorem \ref{thm:root as pde} we know that $u^{\tau_{R}}$ is a solution
of the obstacle PDE (\ref{eq:obsPDE}). Using our parabolic comparison
result, Theorem \ref{thm:compforOSB}, for the supersolution $u^{\tau}$
and subsolution $u^{\tau_{R}}$ shows (\ref{eq:u minimizes}).

To see the second claim, note that by Ito 
\[
\mathbb{E}X_{\tau}^{2}-\mathbb{E}X_{t\wedge\tau}^{2}=\mathbb{E}\left[\int_{\tau\wedge t}^{\tau}\sigma^{2}\left(r,X_{r}\right)dr\right]
\]
and since $\mathbb{E}X_{\tau}^{2}=\int x^{2}\mu\left(dx\right)=\operatorname{const}$
we conclude that this is equivalent to the statement that the Root
stopping time $\tau^{R}$ maximises 
\[
\mathbb{E}X_{t\wedge\tau}^{2}=\mathbb{E}\left[\int_{\mathbb{R}}L_{t\wedge\tau}^{x}dx\right]=-\int_{\mathbb{R}}\left(u^{\tau}\left(t,x\right)-u^{\tau}\left(0,x\right)\right)dx.
\]
Here $\left(L_{t}^{x}\right)_{t,x}$ denotes local time $X$. Hence
it is sufficient to show that the Root stopping time $\tau^{R}$ minimises
$u^{\tau}\left(t,x\right)$ pointwise, i.e.~that for all $\tau\in\operatorname{SEP}_{_{\left(1,\mu,\nu\right)}}$
we have 
\[
-\mathbb{E}\left[\left|W_{\tau\wedge t}-x\right|\right]\equiv u^{\tau}\left(t,x\right)\geq u^{\tau_{R}}\left(t,x\right)\equiv-\mathbb{E}\left[\left|W_{\tau_{R}\wedge t}-x\right|\right]\,\,\,\forall\left(t,x\right).
\]
However, this follows by (i).
\end{proof}

\subsection{\label{sub:bsde}Root's solution via RBSDEs}

Using Theorem \ref{thm:root as pde} we can give another characterization
of the Root solution via Reflected FBSDEs by using \cite{el1997reflected}.
Our main interest is that it gives rise to Monte-Carlo methods to
solve for the barrier. However, it also clarifies further how the Root solution
is naturally linked to a stopping problem\footnote{We point the reader to \cite{BuckdahnHuangLi2012} for
a finer analysis on the regularity of RFBSDE solutions and such connections; the analysis there though does not immediately
cover the current case due to unboundedness of coefficients.}.
\begin{cor}[RBSDE representation]
\label{prop:existenceOBS}Let $\left(\sigma,\mu,\nu\right)$ fulfill
Assumption \ref{as:main assumption}. Then
\begin{enumerate}
\item there exists a unique $R\in\mathcal{R}_{\mu,\nu}$ such that $\tau^{R}=\inf\left\{ t>0:\left(t,X_{t}\right)\in R\right\} \in\operatorname{SEP}_{\sigma,\mu,\nu}$,
\item and for every $T>0$ 
\[
R\lvert_{\left[0,T\right]\times\left[-\infty,\infty\right]}=\left\{ \left(t,x\right):Y_{T-t}^{T-t,x}=u_{\nu}\left(x\right)\right\} 
\]
 where $Y$ denotes the backward dynamics of the solution $\left(N,Y,Z,K\right)$
of the RBSDE%
\footnote{$\left\{ \mathcal{G}_{s}^{t},t\leq s\leq T\right\} $ denots the natural
filtration of a Brownian motion $\left\{ W_{s}-W_{t},\, t\leq s\leq T\right\} $
augmented with the null sets of $\mathcal{G}$. The quadruple $\left(N,Y,Z,K\right)$
is $\mathcal{G}$-adapted and $\left(K_{s}^{t,x}\right){}_{s\in\left[t,T\right]}$
is an increasing and continuous process verifying $K_{t}^{t,x}=0$.
Note $\left(N,Y,Z,K\right)$ does not have to be defined on the same
probability space as our forward martingale $dX=\sigma\left(t,X_{t}\right)dB_{t}$
but with slight abuse of notation we denote the expectation still
with $\mathbb{E}$.%
} 
\begin{equation}
\left\{ \begin{array}{rcl}
N_{s}^{t,x} & = & x+\int_{t}^{s}\sigma\left(T-r,N_{r}^{t,x}\right)dW_{r},\\
Y_{s}^{t,x} & = & u_{\mu}\left(N_{T}^{t,x}\right)+K_{T}^{t,x}-K_{s}^{t,x}-\int_{s}^{T}Z_{r}^{t,x}dW_{r},\\
Y_{s}^{t,x} & \geq & u_{\nu}\left(N_{s}^{t,x}\right),\, t<s\leq T\text{ and }\int_{t}^{T}\left(Y_{s}^{t,x}-u_{\nu}\left(N_{s}^{t,x}\right)\right)dK_{s}^{t,x}=0.
\end{array}\right.\label{eq:RBSDEsubclass-1}
\end{equation}

\end{enumerate}
Moreover, the solution $u$ of the obstacle problem (\ref{eq:obsPDE})
solves the stopping problem
\[
u\left(T-t,x\right)=\sup_{\tau\in\mathcal{T}_{t,T}}\mathbb{E}\left[u_{\nu}\left(N_{\tau}^{t,x}\right)1_{\tau<T}+u_{\mu}\left(N_{\tau}^{t,x}\right)1_{\tau=T}\right]
\]
where $\mathcal{T}_{t,T}=\left\{ \tau:\,\tau\text{ is a \ensuremath{\mathcal{G}}-stopping time and }\tau\in\left[t,T\right]\text{ a.s.}\right\} $.
\end{cor}
$Z$ guides the evolution of $Y$ and $K$ via the Itô integral so
that $Y$ hits the random variable $u_{\mu}\left(N_{T}\right)$ at
horizon time $T$. Note that $Y,K,Z$ are $\left(\mathcal{G}_{t}\right)$-adapted,
nonetheless, $u_{\mu}\left(N_{T}\right)$ is attained at $t=T$. The
process $K$ ensures that $Y$ does not go below the barrier $u_{\nu}$;
it pushes $Y$ upwards whenever $Y$ touches and tries to go below
the barrier $u_{\nu}$, else it remains inactive (that is constant)
--- $K$ is minimal in this sense. Above interpretation as optimization
problem on finite time horizon $T<\infty$ is a special case of 
\begin{align}
K_{T}^{t,x}-K_{s}^{t,x} & =\sup_{s\leq u\leq T}\left(u_{\mu}\left(N_{T}^{t,x}\right)-\int_{u}^{T}Z_{r}^{t,x}W_{r}-u_{\nu}\left(N_{u}^{t,x}\right)\right)^{-},\nonumber \\
Y_{s}^{t,x}=w\left(s,N_{s}^{t,x}\right) & =\sup_{\tau\in\mathcal{T}_{s}}\mathbb{E}\left[u_{\nu}\left(N_{\tau}^{t,x}\right)1_{\tau<T}+u_{\mu}\left(N_{T}^{t,x}\right)1_{\tau=T}\Big|\mathcal{G}_{s}^{t}\right]\nonumber \\
 & =\sup_{\tau\in\mathcal{T}_{s}}\mathbb{E}\left[\left(u_{\nu}\left(N_{\tau}^{t,x}\right)-u_{\mu}\left(N_{\tau}^{t,x}\right)\right)1_{\tau<T}+u_{\mu}\left(N_{\tau}^{t,x}\right)\Big|\mathcal{G}_{s}^{t}\right]\label{eq:snellenvelopsup}
\end{align}
applied with $s=t$; here $\mathcal{T}_{s}:=\left\{ \tau\in\mathcal{T}:s\leq\tau\leq T\right\} $.
Following the theory of Snell envelopes, $Y$ is simply the smallest
supermartingale which dominates the sum inside the expectation. Lastly,
the optimal stopping time solving the above optimization problem (for
$Y_{s}^{t,x}$) is known to be 
\[
D_{s}^{t,x}:=\inf\left\{ s\leq r\leq T:Y_{r}^{t,x}=u_{\nu}\left(N_{r}^{t,x}\right)\right\} 
\]
 with $D_{s}^{t,x}=T$ if $Y_{r}^{t,x}>u_{\nu}\left(N_{r}^{t,x}\right)$
for all $s\leq r\leq T$. 
\begin{rem}
This further clarifies the connection to optimal stopping that can
be seen from the PDE (see also \cite[Remark 4.4]{CoxWang2011}). However
note that the optimization problem is rather non-standard due to the
time reversal and that many embeddings require us to include $T=\infty$
for which the time reversal and RBSDE representation breaks down (at
least for time-inhomogenous $\sigma$).
\end{rem}

\begin{rem}
The following \emph{formal }argument gives at least an intuition why
RBSDE and obstacle PDEs are in a similar relation as SDEs and linear
PDEs: suppose a sufficiently regular solution $w$ of (\ref{eq:OBS-reflectedFBSDE})
exists. Via Itô's formula it follows that
\[
Y_{s}^{t,x}:=w\left(s,N_{s}^{t,x}\right),\, Z_{s}^{t,x}:=\left(\bar{\sigma}\nabla_{x}w\right)\left(s,N_{s}^{t,x}\right)\text{ and }K_{s}^{t,x}:=\int_{t}^{s}\left(-\partial_{t}w-\frac{\overline{\sigma}^{2}}{2}\Delta w\right)\left(r,N_{r}^{t,x}\right)dr.
\]
solves the RFBSDE. The last condition in (\ref{eq:RBSDEsubclass-1})
then reads as 
\[
\int_{t}^{T}\left(Y_{r}^{t,x}-u_{\nu}\left(N_{r}^{t,x}\right)\right)dK_{r}^{t,x}=0\text{ iff }\int_{t}^{T}\left[\left(w-u_{\nu}\right)\left(-\partial_{t}w-\frac{\overline{\sigma}^{2}}{2}\Delta w\right)\right]\left(r,N_{r}^{t,x}\right)dr=0
\]
and the rhs explains the form of the PDE fulfilled by $w$.\end{rem}
\begin{proof}[Proof of Corollary \ref{prop:existenceOBS}]
In view of Theorem \ref{thm:root as pde} we only need to show that
there exists a quadruple $\left(N,Y,Z,K\right)$ that fulfills (\ref{eq:RBSDEsubclass-1})
and that $u\left(t,x\right):=Y_{T-t}^{T-t,x}$ yields a viscosity
solution with linear growth uniform in time.

\emph{Existence \& uniqueness in $\left[0,T\right]\times\mathbb{R}$,
$T<\infty$:} by time reversion of (\ref{eq:obsPDE}) shows that it
is enough to deal with 
\begin{equation}
\left\{ \begin{array}{rcl}
\min\left(w\left(t,x\right)-u_{\nu}\left(x\right),\left(-\partial_{t}-\frac{\overline{\sigma}^{2}}{2}\Delta\right)w\left(t,x\right)\right) & = & 0,\,\left(t,x\right)\in\mathcal{O}_{T}=\left(0,T\right)\times\mathbb{R}\\
w\left(T,x\right) & = & u_{\mu}\left(x\right),\, x\in\mathbb{R}.
\end{array}\right.\label{eq:OBS-reflectedFBSDE}
\end{equation}
where $\overline{\sigma}\left(t,x\right):=\sigma\left(T-t,x\right)$.
Continuity, existence and uniqueness of the viscosity solution $w$
follows from Lemma 8.4, Theorems 8.5 and 8.6 in \cite{el1997reflected}
respectively. The linear growth of $w$ in its spatial variable follows
from standard manipulations for RBSDEs. \cite[Proposition 3.5]{el1997reflected}
applied to the RFBSDE setting above (i.e.~using $\left(t,x\right)\mapsto\sigma\left(T-t,x\right)$
due to the time reversion argument) yields the existence of a constant
$\overline{k}_{T}>0$ such that $\forall\left(t,x\right)\in\left[0,T\right]\times\mathbb{R}$
\[
\left|Y_{t}^{t,x}\right|{}^{2}\leq\mathbb{E}\left[\sup_{s\in\left[t,T\right]}\left|Y_{s}^{t,x}\right|{}^{2}\right]\leq\overline{k}_{T}\left(\mathbb{E}\left[\left|u_{\mu}\left(N_{T}^{t,x}\right)\right|{}^{2}\right]+\mathbb{E}\left[\sup_{s\in\left[t,T\right]}\left|u_{\nu}^{+}\left(s,N_{s}^{t,x}\right)\right|{}^{2}\right]\right)\leq c_{T}\left(1+\left|x\right|{}^{2}\right)
\]
with $u_{\nu}^{+}:=\max\left\{ 0,u_{\nu}\right\} $ and where the
last inequality follows from the linear growth assumptions on $u_{\mu}$
and $u_{\nu}$ along with standard SDE estimates: $\sup_{t\in[0,T]}\mathbb{E}\left[\left|N_{T}^{t,x}\right|{}^{2}\right]\leq\hat{c}_{T}\left(1+\left|x\right|{}^{2}\right)$
(see e.g.~\cite[Equation (4.6)]{el1997reflected}). The solution
to (\ref{eq:obsPDE}) now follows from \cite{el1997reflected}. 

Above estimate for the linear growth in the spatial variable can be
made sharper in the sense that the constant $c_{T}$ is independent
of $T$. This follows via comparison results for RFBSDE (see \cite[Theorem 4.1 (p712)]{el1997reflected}).
Since $u_{\nu}\leq u_{\mu}\leq0$, i.e.~the terminal condition $u_{\mu}$is
non-positive, the component $Y$ is also non-positive. On the other
hand, the solution can not go below the barrier and hence $\left|Y_{T-t}^{T-t,x}\right|\leq\left|u_{\nu}\left(x\right)\right|\leq1+\left|x\right|$. 

\emph{Existence \& uniqueness in $\left[0,\infty\right)\times\mathbb{R}$:
}For any $T,T^{\prime}>0$, $Y_{T-t}^{T-t,x}$ and $Y_{T^{\prime}-t}^{T^{\prime}-t,x}$
coincide on $\left[0,T\wedge T^{\prime}\right)\times\mathbb{R}$.
Hence, we define a function $w\in C\left(\left[0,\infty\right),\mathbb{R}\right)$
by letting $w\left(t,x\right):=Y_{T-t}^{T-t,x}$ for arbitrary chosen
$T>t$. Then $u\left(t,x\right):=w\left(T-t,x\right)$ is the unique
viscosity solution of linear growth uniformly in time of $\min\left(u-u_{\nu},\partial_{t}-\frac{\sigma^{2}}{2}\Delta u\right)=0$,
$u\left(0,\cdot\right)=u_{\mu}\left(\cdot\right)$ (via our comparison
Theorem \ref{thm:compforOSB}). In Corrollary \ref{cor:existence root} we have already
shown that under Assumption \ref{as:main assumption} the solution
$u$ must be decreasing in time and converges to $u_{\nu}$ which
already finishes the proof.
\end{proof}

\subsection{Rost's reversed Root barrier}

Root's solution lets $X$ diffuse as much as possible before it stops it. 
Rost \cite{MR0346920} showed that one can also construct a closed subset $R$ of $\left[0,\infty\right]\times\left[-\infty,\infty\right]$,
the so-called \emph{reversed Root barrier} $R$ that lets $X$ diffuse as little as possible. 
More precisely, we call $R$ a reversed Root barrier if it is relatively closed in $(0,\infty) \times \mathbb{R}$ and $$\left(t,x\right)\in R \mbox{ implies }\left(s,x\right)\in R\;\forall s\leq t.$$ Reversed barriers can always be represented as $\left\{ 0 < t \leq f_R(x) \right\}$, where $f_R$ is upper-semicontinuous on $\mathbb{R}$.
We now briefly show that under an additional assumption on the supports of $\mu$ and $\nu$, the methods of the previous section immediately transfer; especially, this allows to give a (constructive) PDE proof of the existence of a solution to $\operatorname{SEP}_{\sigma,\mu,\nu}$ by a reversed Root barrier.

\begin{assumption} \label{asn:supports}
There exists $V$ open such that 
\begin{equation}
\operatorname{supp}(\mu) \subset V \subset \operatorname{supp}(\nu)^c.
\label{eq:supports}
\end{equation} 
\end{assumption}

\begin{thm}
Consider the following statements:
\begin{enumerate}
\item there exists a reversed Root barrier $R$ such
that $\tau^{R}=\inf\left\{ t>0:\left(t,X_{t}\right)\in R\right\} \in\operatorname{SEP}_{\sigma,\mu,\nu}$,
\item there exists a viscosity solution $u\in C\left(\left[0,\infty\right],\left[-\infty,\infty\right]\right)$
of
\begin{equation}
\left\{ \begin{array}{rcl}
\partial_{t}u & = & \min\left(0,\frac{\sigma^{2}}{2}\Delta u\right)\text{ on }\left(0,\infty\right)\times\mathbb{R},\\
u\left(0,.\right) & = & u_{\mu}\left(.\right)-u_{\nu}\left(.\right).
\end{array}\right.\label{eq:obs problem-1}
\end{equation}

\end{enumerate}
Then under Assumption \ref{as:main assumption}, (i) $\Rightarrow$ (ii), and under Assumptions \ref{as:main assumption} and \ref{asn:supports}, (ii) $\Rightarrow$ (i).
\textup{\emph{Moreover, in either case, one can take}}\textup{ 
\begin{equation}
R=\left\{ \left(t,x\right)\in\left(0,\infty\right]\times \mathbb{R}:u\left(t,x\right)= u\left(0,x\right)\right\} \text{ and }u\left(t,x\right)=-\mathbb{E}\left[\left|X_{t\wedge\tau^{R}}-x\right|\right]-u_{\nu}(x).\label{eq:representations R and u-1}
\end{equation}
}\end{thm}
\begin{proof}
The proof of the first direction, \textbf{(i) implies (ii)},
follows exactly as in Theorem \ref{thm:supersolution} and Theorem
\ref{thm:root as pde}: first one shows that every $\tau\in\operatorname{SEP}_{\sigma,\mu,\nu}$
gives a supersolution and then shows that the potential function of
the reversed Root barrier is a solution. To do so one defines approximations
to $u$ via mollification and shows that they fulfill a perturbed
version of the PDE (\ref{eq:obs problem-1}) (at this point one use
above properties of the Rost barrier), then one concludes by stability of viscosity solutions. In this case, the PDE is linear in $\partial_{t}u$ and the
uniqueness follows already from well-known results that can be found
in the literature (e.g.~\cite{CrandallIshiiLions1992,FlemingSoner2006},
though we note that the domain is unbounded which leads to some subtleties
that are treated in \cite{DiehlFrizOberhauser}). 

To see that \textbf{(ii) implies (i) } we argue similarly as in Theorem
\ref{thm:root as pde}. Set 
\[
R:=\left\{ \left(t,x\right)\in\left(0,\infty\right)\times\mathbb{R}:u\left(t,x\right)= u\left(0,x\right)\right\} 
\]
and note that since $t\mapsto u\left(t,x\right)$ is decreasing (since
$\partial_{t}u\equiv\min\left(0,\frac{\sigma^{2}}{2}\Delta u\right)\leq0$
in viscosity sense) $R$ is indeed a reversed barrier. It remains to prove
that $\tau^{R}=\inf\left\{ t:\left(t,X_{t}\right)\in R\right\} \in\operatorname{SEP}_{\sigma,\mu,\nu}$.

\textbf{Step 1} : We claim that
\begin{equation}
\lim_{t\to \infty} u(t,x) = 0, \;\;\; \forall x \in \mathbb{R},
\label{eq:lim-u}
\end{equation}
\begin{equation}
(0,\infty) \times V \subset R^c.
\label{eq:V}
\end{equation}
The first property follows by a comparison argument : since $u(0,\cdot)$ is bounded and goes to $0$ for $x \to \infty$, for each $\epsilon >0$ it can be bounded by a function $\psi_0^\epsilon$ such that $\psi_0^\epsilon$ is bounded and concave on some interval $I$, and identically equal to$\epsilon$ outside $I$. Then $u$ is bounded from above by the solution $v$ with initial data $\psi_0^\epsilon$ to $(\partial_t - \frac{\sigma^2}{2} \partial_{xx})v = 0$ on $(0,\infty)\times I$, $v \equiv \epsilon$ outside $I$. But by ellipticity $v(t,x)$ converges to $\epsilon$ as $t \to \infty$, so that $u(\infty,x) \leq \epsilon$, which proves \eqref{eq:lim-u} since $\epsilon$ was arbitrary.

For the second claim, note that by assumption $\operatorname{supp}(\nu) \cap V = \emptyset$, so that $\partial_{xx} u(0,\cdot) = -2 \mu \leq 0$ on $V$, i.e. $u(0,\cdot)$ is concave on (any connected component of) $V$. Then one can show that $u(t,\cdot)$ is concave on $V$ for all $t$, and in fact solves $(\partial_t - \frac{\sigma^2}{2}\partial_{xx})u =0$ on $V$. Finally, by local ellipticity, comparing with the solution to the heat equation on $V$ we can deduce that for any $t$ $>$ $0$, $u(t,x) < u(0,x)$ for all $x$ in $V$, i.e. $(0,\infty) \times V \subset R^c$.

\textbf{Step 2} : As in the Root barrier case, we will approximate $R$ by barriers 
$$R_\epsilon \subset R \subset R^\epsilon$$
and letting $\nu_\epsilon$, $\nu^\epsilon$ be the distributions of $X$ at the hitting times of these, prove that
\begin{equation}
u_{\nu_\epsilon} \leq u_\nu \leq u_{\nu^\epsilon}.
\label{eq:IneqpotentialsRost}
\end{equation}
To be precise, if $f_R$ is the barrier function for $R$, the barrier functions for $R_\epsilon$ and $R^\epsilon$ are defined by
$$f_{R_\epsilon}(x) = \left( f_R(x) - \epsilon\right)_+, \;\;\;\; f_{R^\epsilon}(x) = \begin{cases} f_R(x) + \epsilon, & x\notin V,\\ 0, & x\in V. \end{cases}$$

Let us prove that $u_\nu \leq u_{\nu^\epsilon}$. We define $u^\epsilon(t,x) = -\mathbb{E}\left[\left|X_{\tau^{R^\epsilon}\wedge t}-x\right|\right] - u_{\nu^\epsilon}(x)$. By the same arguments as in (i) -> (ii), one proves that $u^\epsilon$ satisfies the same PDE as $u$, and that on $(R^\epsilon)^c$ one has $\left(\partial_t - \frac{\sigma^2}{2} \partial_{xx}\right) u^\epsilon = 0$ and $u^\epsilon(t,x) < u^\epsilon(0,x)$. Let $w = u^\epsilon - u$, it will be enough to show that $w \leq 0$ (since $w(0,\cdot) = u_\nu - u_{\nu^\epsilon}$). Since $R \subset R^\epsilon$, one has that $w$ is constant in time on $R$, and satisfies $\left(\partial_t -\frac{\sigma^2}{2} \partial_{xx}\right) w \leq 0$ on $R^c$. In particular, $w$ satisfies $\partial_t w - \left(\frac{\sigma^2}{2} \partial_{xx} w\right)_+ \leq 0$, so that by comparison $\sup w = \sup w(0,\cdot)$. Noting that both $\nu$ and $\nu^\epsilon$ do not charge $V$, $w$ is affine on (each component of) $V$, so that $\sup w(0,\cdot) = \sup_{x \notin V} w(0,x)$. Then since $w$ is continuous and goes to $0$ at infinity, one can find $x \notin V$ achieving this maximum. Then if $f_R(x) < \infty$, by definition of $R^\epsilon$, there exists $(t,x) \in R^\epsilon \setminus R$. Then one has $$w(t,x) = u^\epsilon(t,x) - u(t,x) = u^\epsilon(0,x) - u(t,x) < u^\epsilon(0,x) - u(0,x) = w(0,x),$$
a contradiction. So $f_R(x) = f_{R^\epsilon}(x)=\infty$, and by \eqref{eq:lim-u} this implies $u_\nu(x) = u_{\nu^\epsilon}(x) = u_\mu(x)$. This finishes the proof of the second inequality in \eqref{eq:IneqpotentialsRost}, and the first one is proved by similar arguments which we leave to the reader.

\textbf{Step 3} : It just remains to prove that
\begin{equation}
\lim_{\epsilon \to 0} u_{\nu_\epsilon} = \lim_{\epsilon \to 0} u_{\nu^\epsilon}.
\label{eq:LimEpsRost}
\end{equation}
This is easy, once one notices (as in \cite{chaconThesis, beiglboeck2013optimal}) that shifting the barrier is the same as shifting the starting point. Indeed, extend $R$ to $\mathbb{R} \times \mathbb{R}$ by $\tilde{R} = R \cup (-\infty, 0] \times V^c$. Then letting $\tilde{\tau}$ be the hitting time of $\tilde{R}$ by the space-time process (not necessarily started at time $0$), define for a fixed $x$ the function 
$$\psi(s,y) := - \mathbb{E}^{s,y} \left[ \left|X_{\tilde{\tau}} - x\right| \right].$$
Then one has $u_{\nu^\epsilon}(x) = \int \psi(-\epsilon,y) \mu(dy)$, $u_{\nu_\epsilon}(x) = \int \psi(-\epsilon,y) \mu(dy)$. But $\psi$ satisfies $\left( \partial_t + \frac{\sigma^2}{2} \partial_{xx}\right)\psi = 0$ outside $\tilde{R}$, so that by ellipticity it is in particular continuous on $\tilde{R}^c \supset \{0\} \times \operatorname{supp}(\mu)$. This finishes the proof of \eqref{eq:LimEpsRost}, and of the theorem.
\end{proof}
A standard application of Perron's method now implies the existence of a reversed barrier solution. 
Previous proofs of reversed barrier solutions are rather involved since they make use of a heavy potential theoretic machinery (``the filling scheme'' \cite{MR0346920,chaconThesis}; though we draw attention to the recent optimal transport approach \cite{beiglboeck2013optimal} as well as work of McConnell \cite{mcconnell1991two} that is closest in spirit to our approach, though arguably more complicated).

\begin{cor}
Let $\left(\sigma,\mu,\nu\right)$ fulfill Assumptions \ref{as:main assumption} and \ref{asn:supports}.
Then there exists a reversed Root barrier $R$ such
that $\tau^{R}=\inf\left\{ t:\left(t,X_{t}\right)\in R\right\} \in\operatorname{SEP}_{\sigma,\mu,\nu}$. 
\end{cor}

\begin{rem}
As is known since the work of Chacon \cite{chaconThesis}, a sharp condition for the existence of the reversed Root barrier is that $\nu \wedge \mu = 0$ (the case of general $\mu$, $\nu$ in convex order requires additional randomization at time $0$). Our proof of (ii) $\Rightarrow$ (i) above does not work in that case without modifications, since simple examples show that one could have $\tau_R >0$ while $\tau_{R^\epsilon} = 0$ for all $\epsilon$ (where $R^\epsilon$ is the barrier shifted by $\epsilon$ outside of the support of $\mu$). It is reasonable to hope that a modification based on approximating $\nu$ by measures fulfilling Assumption \ref{asn:supports} could give a PDE proof for existence also in that general case, but we do not pursue this here.
\end{rem}
\begin{rem} The minimizing property of the reversed barrier follows exactly as in the case of Root barriers from parabolic comparison \cite{DiehlFrizOberhauser}, so we do not discuss this any further. 
We also do not spell out the uniqueness of reversed barriers here, but we leave it for the reader to verify that (as in the case of Root barriers) the free boundary always gives the maximal version of the reversed barrier. 
Similarly we do not pursue the interpretation of $u$ as generalized value function of a stopping problem.
\end{rem}
\begin{rem}
In subsequent work, Cox--Wang \cite{2013arXiv1308.4363C} studied the reversed Root barrier in a mathematical finance context.
They use work of Chacon and Rost \cite{chaconThesis,MR0346920} that ensures existence of a reversed barrier for time-homogeneous, uniformly elliptic diffusions and then use above PDE to calculate $R$.
\end{rem}
\section{\label{sec:Application-I:-calculating}Numerics: root barriers via
barles--souganidis methods}

While it falls outside the scope of this article to study numerics
of the obstacle PDE (\ref{eq:obsPDE}) in full generality we briefly
give two applications: firstly we show that classic Barles--Souganidis
method \cite{BarlesSouganidis1991,BarlesSouganidis1990} can be easily
adapted to our setting; secondly we give some concrete examples by
implementing these schemes for rather generic embedding problems.

\subsection{$\mu$ and $\nu$ of bounded support}

We give a quick construction by adapting \cite{BarlesSouganidis1991,BarlesSouganidis1990,BarlesDaherRomano1995,FlemingSoner2006}
to our setting and implementing an explicit finite differences scheme.
On $\mathcal{O}_{T}:=\left[0,T\right]\times\left[a,b\right]$ and
setting $h:=\left(\Delta t,\Delta x\right)=\left(\frac{T}{N_{T}},\frac{b-a}{N_{x}}\right)$
for $N_{T},N_{x}\in\mathbb{N}$ large enough we define the time-space
mesh of points 
\[
\mathcal{G}_{h}:=\left\{ t_{n}:t_{n}=n\Delta t,\, n=0,1,\cdots,N_{T}\right\} \times\left\{ x_{j}:x_{j}=a+j\Delta x,\, j=0,1,\cdots,N_{x}\right\} .
\]
Let $\mathcal{B}\left(\mathcal{O}_{T},\mathbb{R}\right)$ be the set
of bounded functions from $\mathcal{O}_{T}$ to $\mathbb{R}$ and
$\mathcal{BUC}\left(\mathcal{O}_{T},\mathbb{R}\right)\subset\mathcal{B}\left(\mathcal{O}_{T},\mathbb{R}\right)$
the subset of bounded uniformly continuous functions. Take $\psi\in\mathcal{BUC}\left(\mathcal{O}_{T},\mathbb{R}\right)$,
we define its projection on $\mathcal{G}_{h}$ by $\psi^{h}:\mathcal{O}_{T}^{h}\to\mathbb{R}$
with $\mathcal{O}_{T}^{h}:=\left[0,T+\Delta t\right)\times\left[a-\Delta x/2,b+\Delta x/2\right]$
as $\psi^{h}\left(t,x\right):=\psi\left(t_{n},x_{j}\right)$ when
$(t,x)\in[t_{n},t_{n+1})\times[x_{j}-\Delta x/2,x_{j}+\Delta x/2)$
for some $n\in\left\{ 0,1,\cdots,N_{T}\right\} $ and $j\in\left\{ 0,1,\cdots,N_{x}\right\} $;
of course $\psi^{h}\in\mathcal{B}\left(\mathcal{O}_{T},\mathbb{R}\right)$.
Denote the approximation to the solution $u\in\mathcal{BUC}\left(\mathcal{O}_{T},\mathbb{R}\right)$
of (\ref{eq:obsPDE}) by $u^{h}\in\mathcal{B}\left(\mathcal{O}_{T},\mathbb{R}\right)$.
Define the operator $S^{h}:\mathcal{B}\left(\mathcal{O}_{T},\mathbb{R}\right)\times\left[0,T\right]\times\left[a,b\right]\mapsto\mathbb{R}$
as 
\[
S^{h}\left[u^{h}\right]\left(t,x\right):=\left\{ \begin{array}{ll}
u_{\mu}\left(x\right) & ,\left(t,x\right)\in\left[0,\Delta t\right)\times\left(a,b\right)\\
u^{h}\left(t,x\right)+\frac{\Delta t\,\sigma^{h}\left(t,x\right)}{2\left(\Delta x\right){}^{2}}\left(u^{h}\left(t,x+\Delta x\right)-2u^{h}\left(t,x\right)+u^{h}\left(t,x-\Delta x\right)\right) & ,\left(t,x\right)\in\left[\Delta t,T\right]\times\left(a,b\right)\\
u_{\mu}\left(x\right) & ,\left(t,x\right)\in\left[0,T\right]\times\left\{ a,b\right\} 
\end{array}\right.
\]
where we assume that the usual CFL condition $\operatorname{CFL}:=\Delta t\left|\sigma\right|_{\infty;\left[a,b\right]\times\left[0,T\right]}<\left(\Delta x\right){}^{2}$
holds. The values of $u^{h}$ are computed by solving for $u^{h}\left(t,x\right)$
in $G(.)=0$ where $G:\left(0,\infty\right){}^{2}\times\mathcal{O}_{T}\times\mathbb{R}\times\mathcal{B}\left(\mathcal{O}_{T},\mathbb{R}\right)\mapsto\mathbb{R}$
is defined as 
\[
G\left(h,\left(t+\Delta t,x\right),u^{h}\left(t+\Delta t,x\right),u^{h}\right):=\min\left\{ u^{h}\left(t+\Delta t,x\right)-u_{\nu}\left(x\right),u^{h}\left(t+\Delta t,x\right)-S^{h}\left[u^{h}\right]\left(t,x\right)\right\} 
\]
By \cite{BarlesSouganidis1991,BarlesSouganidis1990} we only have
to guarantee that the operator $S^{h}\left[.\right]\left(.\right)$
and the PDE (\ref{eq:obsPDE}) satisfies along some sequence $h:=\left(\Delta t,\Delta x\right)$
converging to $\left(0,0\right)$ the following properties:
\begin{itemize}
\item \emph{Monotonicity}. $G\left(h,\left(t,x\right),r,f^{h}\right)\leq G\left(h,\left(t,x\right),r,g^{h}\right)$
whenever $f\leq g$ with $f,g\in\mathcal{B}$ (and for finite values
of $h,t,x,r$);
\item \emph{Stability}. For every $h>0$, the scheme has a solution $u^{h}$
on $\mathcal{G}_{h}$ that is uniformly bounded independently of $h$
(under the CFL condition, see above);
\item \emph{Consistency}. For any $\psi\in C_{b}^{\infty}\left(\mathcal{O}_{T};\mathbb{R}\right)$
and $\left(t,x\right)\in\mathcal{O}_{T}$, we have (under the $\operatorname{CFL}$
condition, see above): 
\begin{eqnarray*}
 &  & \lim_{\left(h,\xi,t_{n}+\Delta t,x_{j}\right)\rightarrow\left(0,0,t,x\right)}\left(\left(\psi\left(t_{n}+\Delta t,x_{j}\right)+\xi\right)-u_{\nu}(x)\right)\wedge\frac{\left(\psi\left(t_{n}+\Delta t,x_{j}\right)+\xi\right)-S^{h}\left[\psi^{h}+\xi\right]\left(t_{n},x_{j}\right)}{\Delta t}\\
 & = & \min\left\{ \psi\left(t,x\right)-u_{\nu}\left(x\right),\left(\partial_{t}\psi-\frac{\sigma^{2}}{2}\Delta\psi\right)\left(t,x\right)\right\} 
\end{eqnarray*}

\item \emph{Strong uniqueness}. if the locally bounded USC {[}resp.~LSC{]}
function $u$ {[}resp.~$v${]} is a viscosity subsolution {[}resp.
supersolution{]} of (\ref{eq:obsPDE}) then $u\leq v$ in $\mathcal{O}_{T}$;\end{itemize}
\begin{prop}
Let $T\in\left(0,\infty\right)$. Assume $\mu,\nu$ have compact support
and $\left(\sigma,\mu,\nu\right)$ fulfill Assumption \ref{as:main assumption}.
Then $u^{h}\in\mathcal{B}\left(\left[0,T\right]\times\mathbb{R},\mathbb{R}\right)$
and 
\[
\left|u^{h}-u\right|_{\infty;\left[0,T\right]\times\mathbb{R}}\rightarrow0\text{ as }h\rightarrow\left(0,0\right)
\]
where $u$ denotes the unique viscosity solution of linear growth
of (\ref{eq:obsPDE}) on $\left[0,T\right]$.\end{prop}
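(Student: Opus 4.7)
The plan is to verify the four properties (monotonicity, stability, consistency, strong uniqueness) enumerated immediately above the statement and then invoke the Barles--Souganidis convergence theorem \cite{BarlesSouganidis1991}, which yields locally uniform convergence of $u^h$ to the unique bounded viscosity solution of (\ref{eq:obs}) on $\mathcal{O}_T$. Since $\mu,\nu$ have compact support, Proposition \ref{lemma:propertiespotentials} gives $a,b\in\mathbb{R}$ with $N^{\mu,\nu}\supset(-\infty,a]\cup[b,\infty)$, so $u_\mu=u_\nu$ outside $[a,b]$ and the obstacle problem effectively lives on the compact domain $\mathcal{O}_T$ with uniformly continuous data; existence and uniqueness of a bounded viscosity solution then follow from Proposition \ref{prop:existenceOBS} combined with Theorem \ref{thm:pdetobarrier}, which at once furnishes the strong uniqueness (comparison) property via Theorem \ref{thm:compforOSB}.

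First I would verify \emph{monotonicity} of $S^h$: writing
\[
S^h[u^h](t,x) = \Bigl(1-\tfrac{\Delta t\,\sigma^h(t,x)}{(\Delta x)^2}\Bigr)u^h(t,x) + \tfrac{\Delta t\,\sigma^h(t,x)}{2(\Delta x)^2}\bigl(u^h(t,x+\Delta x)+u^h(t,x-\Delta x)\bigr)
\]
on the interior, the CFL condition $\Delta t\,|\sigma|_{\infty}<(\Delta x)^2$ makes all three coefficients nonnegative, so $S^h[f^h]\le S^h[g^h]$ whenever $f\le g$; since $G$ is the minimum of two expressions each nondecreasing in $r$ and nonincreasing in $u^h$ (via $S^h$), the required monotonicity of $G$ follows. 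For \emph{stability} I would show by backward induction on the time index that $u_\nu(x)\le u^h(t,x)\le u_\mu(x)$ on $\mathcal{G}_h$: the lower bound is enforced by the outer $\min$ with $u_\nu$, and the upper bound propagates since $S^h$ is a convex combination (under CFL) and $x\mapsto u_\mu(x)$ is concave with $u_\mu\ge u_\nu$, so the scheme preserves the upper obstacle $u_\mu$.

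For \emph{consistency}, I would plug a smooth test function $\psi\in C_b^\infty(\mathcal{O}_T,\mathbb{R})$ together with a constant perturbation $\xi$ into $S^h$ and Taylor-expand:
\[
\frac{\psi(t_n+\Delta t,x_j)-S^h[\psi^h+\xi](t_n,x_j)}{\Delta t}
= \partial_t\psi(t_n,x_j) - \tfrac{\sigma^2(t_n,x_j)}{2}\Delta\psi(t_n,x_j) + O(\Delta t)+O((\Delta x)^2),
\]
using uniform continuity of $\sigma$ on $\mathcal{O}_T$ to handle $\sigma^h\to\sigma$; passing to the limit inside the outer minimum (and noting $\xi\to0$, $\psi(t_n+\Delta t,x_j)\to\psi(t,x)$) recovers exactly the left-hand side of (\ref{eq:obs problem}). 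The \emph{strong uniqueness} property reduces to the comparison result in Theorem \ref{thm:compforOSB} applied on $\mathcal{O}_T$, which holds since the data $u_\mu,u_\nu,\sigma$ are uniformly continuous on $[a,b]$ and $u$ is of linear (here bounded) growth.

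The main obstacle I anticipate is a minor technical one: handling the artificial lateral boundary $\{a,b\}$ of the truncated domain so that the scheme still approximates the PDE on all of $\mathbb{R}$. This is resolved by observing that by Theorem \ref{ROOT_to_OBS}(1) and Proposition \ref{lemma:propertiespotentials}(4), the true solution $u$ equals $u_\mu=u_\nu$ on $\{a,b\}$ (since both measures coincide outside $[a,b]$), so the Dirichlet condition $u^h=u_\mu$ imposed at $x=a,b$ is consistent with the true solution and the convergence $u^h\to u$ on $\mathcal{O}_T$ extends trivially to all of $[0,T]\times\mathbb{R}$ by setting $u^h:=u_\mu$ outside $[a,b]$. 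Once the four Barles--Souganidis conditions are in place, their theorem delivers locally uniform convergence of $u^h$ to $u$, which on the compact set $[0,T]\times[a,b]$ (and hence trivially on all of $[0,T]\times\mathbb{R}$) upgrades to the claimed uniform convergence.
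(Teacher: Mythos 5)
Your proposal takes essentially the same route as the paper: verify monotonicity, stability, consistency and strong uniqueness, then invoke the Barles--Souganidis (and Barles--Daher--Romano) framework, with strong uniqueness coming from Theorem \ref{thm:compforOSB} and existence from Proposition \ref{prop:existenceOBS}. The paper leaves the verification of the first three conditions to the reader, whereas you spell them out (the convex-combination form of $S^h$ under CFL for monotonicity, the $u_\nu \le u^h \le u_\mu$ envelope using concavity of $u_\mu$ for stability, and a Taylor expansion for consistency); one small slip is that the induction establishing stability should run forward in the time index $n$, since the scheme marches forward from $t=0$, not backward.
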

\begin{proof}
This follows by verification of the assumptions in \cite[Theorem 2.1]{BarlesSouganidis1991,BarlesDaherRomano1995}:
strong uniqueness comes from our comparison theorem, existence from
Corollary \ref{cor:existence root}. Monotonicity, stability and consistency
follow by a direct calculation which we do not spell out here. The
rest of the proof is given by following closely \cite[Theorem 2.1]{BarlesSouganidis1991,BarlesDaherRomano1995}
combined with the remarks on the first example in \cite[Section 5]{BarlesDaherRomano1995}:
one first shows that the operator $S^{h}\left[.\right]\left(.\right)$
approximates the diffusion component of (\ref{eq:obsPDE}) and subsequently
adds the barrier to recover the full equation (\ref{eq:obsPDE}).
One finally concludes as in \cite[p130]{BarlesDaherRomano1995} by
semi-relaxed limits in combination with our comparison result, Theorem
(\ref{thm:compforOSB}).
\end{proof}
\begin{figure}
\begin{minipage}[t]{0.45\textwidth}%
\begin{center}
\includegraphics[bb=0bp 0bp 300bp 194bp,clip,scale=0.6]{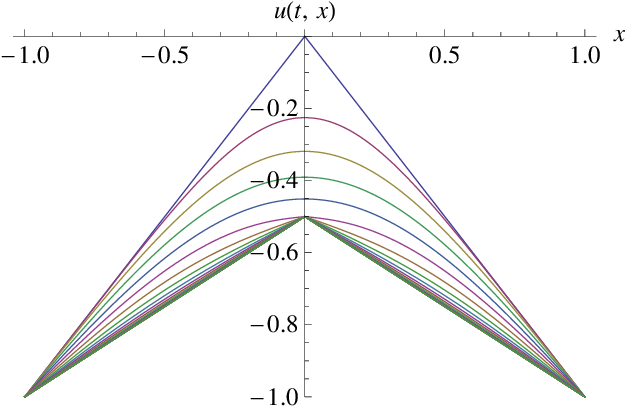}
\par\end{center}%
\end{minipage}\hfill{}%
\begin{minipage}[t]{0.45\textwidth}%
\begin{center}
\includegraphics[clip,scale=0.6]{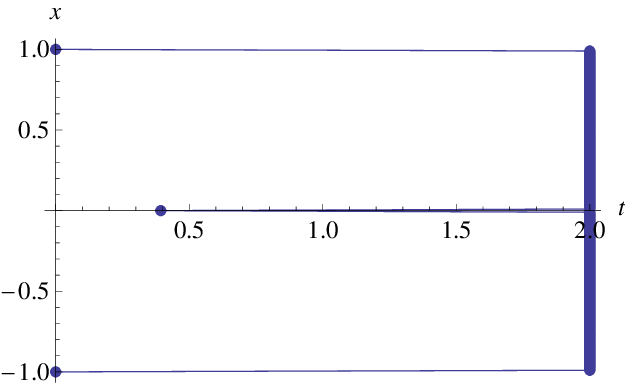}
\par\end{center}%
\end{minipage}\caption{$\sigma=1$, $\mu=\delta_{0}$ and $\nu=\frac{1}{4}\delta_{-1}+\frac{1}{2}\delta_{0}+\frac{1}{4}\delta_{1}$.
Above finite difference scheme is used with $\operatorname{CFL}=0.2$
and $50.10^{3}$ time steps on the time domain $\left[0,2\right]$
and spatial domain $\left[-1,1\right]$. The left plot shows that
for $t_{0}\sim0.39$ the potentials touch at $x=0$ which determines
the spike of the Root barrier depicted in the right plot.}
\end{figure}
\begin{figure}
\begin{minipage}[t]{0.45\textwidth}%
\begin{center}
\includegraphics[bb=0bp 0bp 300bp 194bp,clip,scale=0.6]{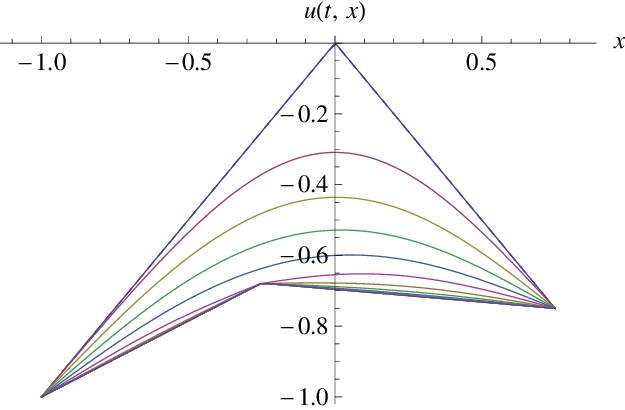}
\par\end{center}%
\end{minipage}\hfill{}%
\begin{minipage}[t]{0.45\textwidth}%
\begin{center}
\includegraphics[clip,scale=0.6]{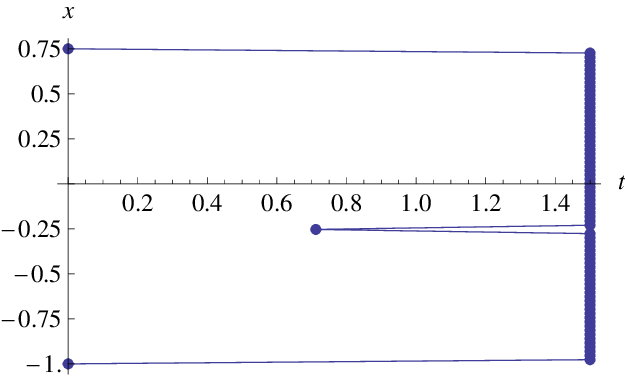}
\par\end{center}%
\end{minipage}\caption{$\sigma=1$,$\mu=\delta_{0}$ and $\nu=\frac{2}{7}\delta_{-1}+\frac{1}{4}\delta_{-\frac{1}{4}}+\frac{13}{28}\delta_{\frac{3}{4}}$. }
\end{figure}
\begin{figure}
\begin{minipage}[t]{0.45\textwidth}%
\begin{center}
\includegraphics[bb=0bp 0bp 300bp 194bp,clip,scale=0.6]{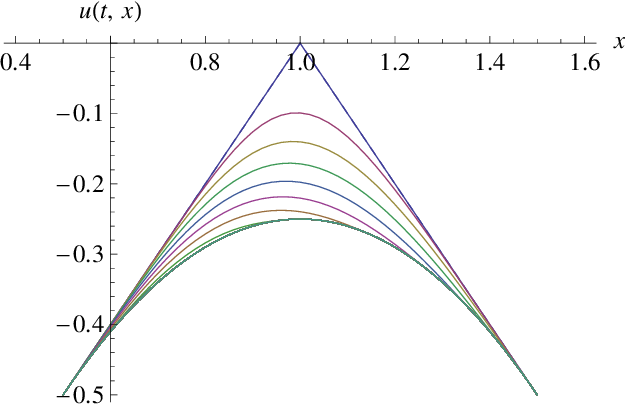}
\par\end{center}%
\end{minipage}\hfill{}%
\begin{minipage}[t]{0.45\textwidth}%
\begin{center}
\includegraphics[clip,scale=0.6]{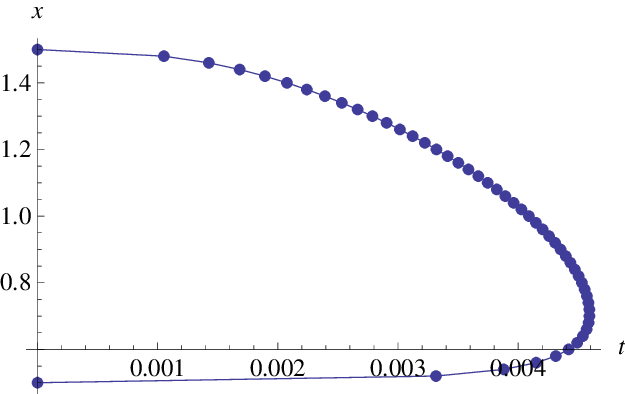}
\par\end{center}%
\end{minipage}\caption{$\sigma\left(x\right)=x$, $\mu=\delta_{1}$ and $\nu=\mathcal{U}\left(\left[\frac{1}{2},\frac{3}{2}\right]\right)$.
Initial and target measure are symmetric but the barrier is asymmetric
due to $\sigma\left(x\right)=x$.}
\end{figure}
\begin{figure}
\begin{minipage}[t]{0.45\textwidth}%
\begin{center}
\includegraphics[bb=0bp 0bp 300bp 194bp,clip,scale=0.6]{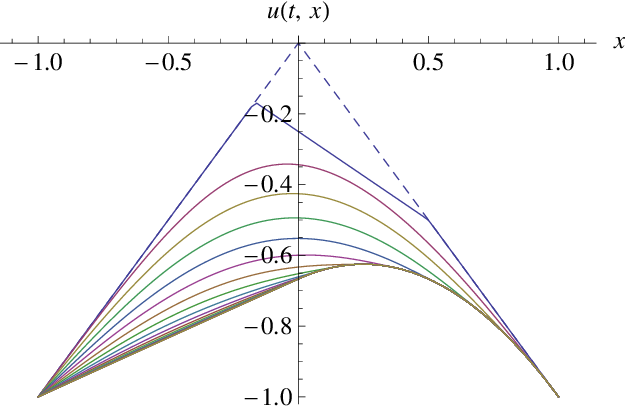}
\par\end{center}%
\end{minipage}\hfill{}%
\begin{minipage}[t]{0.45\textwidth}%
\begin{center}
\includegraphics[clip,scale=0.6]{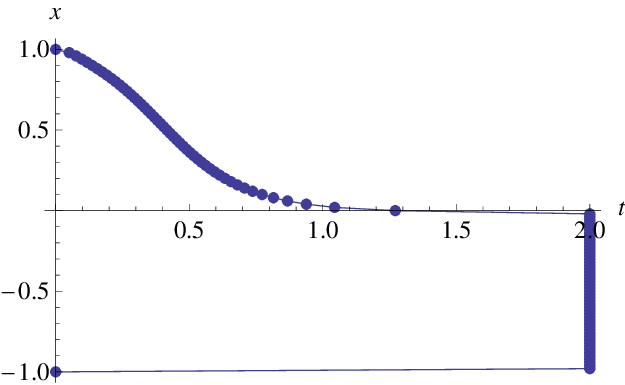}
\par\end{center}%
\end{minipage}\caption{$\sigma\left(x\right)=1$,$\mu=\frac{3}{4}\delta_{-\frac{1}{6}}+\frac{1}{4}\delta_{0.5}$
and $\nu=\frac{1}{3}\delta_{-1}+\frac{2}{3}\mathcal{U}\left(\left[0,1\right]\right).$ }
\end{figure}

\subsection{$\mu$ and $\nu$ of unbounded support}

For simplicity we restrict ourselves to embeddings into Brownian motion
(i.e.~$\sigma\equiv1$). In this case recent results of Jakobsen
\cite{jakobsen2003rate} apply and give a convergence rate of order
$\frac{1}{2}$. Denote $h=\left(\Delta t,\Delta x\right)$ and consider
schemes of the type 
\[
u^{h}\left(t+\Delta t,x\right)=\max\left\{ u_{\nu}\left(x\right),S_{\Delta t}u^{h}\left(t,x\right)\right\} 
\]
where $S_{\Delta t}$ is the (formal) solution operator associated
to the heat equation $\partial_{t}w-\frac{1}{2}\Delta w=0$. In the
case that we use a finite difference method this scheme can be written
as

\begin{equation}
\min\left\{ u^{h}\left(t+\Delta t,x\right)-u_{\nu}\left(x\right),\frac{u^{h}\left(t+\Delta t,x\right)-u^{h}\left(t,x\right)}{\Delta t}-\frac{u^{h}\left(t,x+\Delta x\right)-2u^{h}\left(t,x\right)+u^{h}\left(t,x-\Delta x\right)}{2\left(\Delta x\right)^{2}}\right\} =0.\label{eq:scheme}
\end{equation}
A direct calculation also shows that this is equivalent to (see Jakobsen
\cite[page 11 in Section 3]{jakobsen2003rate})

\[
u^{h}\left(t+\Delta t,x\right)=\max\left\{ u_{\nu}\left(x\right),u^{h}\left(t,x\right)+\frac{\Delta t}{2\left(\Delta x\right){}^{2}}\left(u^{h}\left(t,x+\Delta x\right)-2u^{h}\left(t,x\right)+u^{h}\left(t,x-\Delta x\right)\right)\right\} 
\]
and above representation is advantageous for the proof.
\begin{prop}[{\cite[Section 3]{jakobsen2003rate}}]
Let $\left(1,\mu,\nu\right)$ fulfill Assumption (\ref{as:main assumption}).
Then there exists a unique $u^{h}$ solving (\ref{eq:scheme}). Further,
if $\Delta t\leq\left(\Delta x\right)^{2}$ and $u_{0}^{h}$ is an
approximation of $u_{0}$ which is bounded independently of $h$ then
\[
\left|u-u^{h}\right|_{\infty}\lesssim\sup_{\left[0,\Delta t\right)\times\mathbb{R}}\left|u-u_{0}^{h}\right|+\left(\Delta x\right)^{1/2}
\]
\end{prop}
\begin{proof}
This is a direct consequence of \cite[Section 3]{jakobsen2003rate}
which shows that one can replace the Barles--Souganidis assumptions
by more special conditions ($C1-C5$ in \cite[Section 2]{jakobsen2003rate}).
A direct calculation shows then that these are fulfilled for the finite-difference
scheme under our assumptions.
\end{proof}

\section{a comparison for obstacle pdes and a lemma about jets}

Comparison theorems for obstacle problems can be found in the literature,
see \cite{jakobsen2002continuous,jakobsen2003rate,el1997reflected}.
However, due to the unboundedness of the coefficients as well as space
they do not cover our setup. We provide a complete proof by revisiting
work of \cite{jakobsen2002continuous,jakobsen2003rate,DiehlFrizOberhauser}.
It also establishes Hölder regularity in space of viscosity solutions.

\subsection{A Comparison Theorem for the obstacle problem}
\begin{thm}
\label{thm:compforOSB}Let $h\in C\left(\mathbb{R},\mathbb{R}\right)$
be of linear growth, i.e.~ $\exists c>0$ such that 
\[
\left|h\left(x\right)\right|\leq c\left(1+\left|x\right|\right)\text{ for all }x\in\mathbb{R}
\]
and $\sigma\in C\left(\left[0,T\right]\times\mathbb{R},\mathbb{R}\right)$
Lipschitz in space, uniformly in time ($\sup_{t}\left|\sigma\left(t,.\right)\right|_{Lip}<\infty$).
Define  
\[
F_{obs}\left(t,x,r,a,p,M\right)=\min\left(r-h\left(x\right),a-\frac{\sigma^{2}\left(t,x\right)}{2}m\right).
\]
Let $u\in USC\left(\left[0,T\right]\times\mathbb{R},\mathbb{R}\right)$
be a viscosity subsolution and $v\in LSC\left(\left[0,T\right]\times\mathbb{R},\mathbb{R}\right)$
a viscosity supersolution of the PDE
\[
F_{obs}\left(t,x,u,\partial_{t}u,Du,D^{2}u\right)\leq0\leq F_{obs}\left(t,x,v,\partial_{t}v,Dv,D^{2}v\right)\text{ on }\left(0,T\right)\times\mathbb{R}
\]
Further assume that $\forall\left(t,x\right)\in\left[0,T\right)\times\mathbb{R}$,
$u\left(t,x\right),-v\left(t,x\right)\leq C\left(1+\left|x\right|\right)$
for some constant $C>0$ and that $u\left(0,.\right)$ and $v\left(0,.\right)$
are $\delta$-Hölder continuous. Then there exists a constant $c\geq0$
s.t.~$\forall\left(t,x,y\right)\in\left[0,T\right)\times\mathbb{R}\times\mathbb{R}$
\[
u\left(t,x\right)-v\left(t,y\right)\leq\sup_{z\in\mathbb{R}}\left(u_{0}\left(z\right)-v_{0}\left(z\right)\right)+c\inf_{\alpha>0}\left\{ \alpha^{-\frac{1}{2-\delta}}+\alpha\left|x-y\right|^{2}\right\} .
\]
Direct consequences of this estimate are
\begin{enumerate}
\item \label{enu:comp}$u_{0}\leq v_{0}$ implies $u\leq v$ on $\left[0,T\right)\times\mathbb{R}$,
\item if $u$ is also a supersolution (viz.~$u$ is a viscosity solution)
then $u$ is $\delta$-Hölder continuous in space uniformly in time
on $\left[0,T\right)$, i.e.
\[
\sup_{t\in\left[0,T\right)}\left|u\left(t,.\right)\right|_{C^{\delta}\left(\mathbb{R}\right)}<\infty.
\]

\end{enumerate}
\end{thm}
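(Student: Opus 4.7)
The strategy is a doubling-of-variables argument in the spirit of the User's guide \cite{MR1118699UserGuide}, but with an extra $\epsilon$-penalty to control the unbounded domain and with a dichotomy to handle the obstacle nonlinearity. For parameters $\alpha>0$ large and $\lambda,\epsilon>0$ small, and a fixed $T^{\prime}<T$, I would introduce
\[
\Phi(s,\xi,\eta) := u(s,\xi) - v(s,\eta) - \alpha|\xi-\eta|^2 - \lambda s - \epsilon\bigl(\sqrt{1+|\xi|^2} + \sqrt{1+|\eta|^2}\bigr)
\]
on $[0,T^{\prime}]\times\mathbb{R}\times\mathbb{R}$. The smooth radial weight grows linearly at infinity, which combined with the linear-growth assumption on $u$ and $-v$ forces $\Phi$ to attain its supremum at some $(\hat s,\hat\xi,\hat\eta)$; standard arguments (Lemma 3.1 of \cite{MR1118699UserGuide}) also give $\alpha|\hat\xi-\hat\eta|^2\to 0$ as $\alpha\to\infty$ for each fixed $\epsilon$.

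Next I treat the cases $\hat s = 0$ and $\hat s > 0$ separately. If $\hat s=0$, the $\delta$-H\"older bound on $u_0,v_0$ together with a Young-type inequality $[u_0]_\delta|\hat\xi-\hat\eta|^\delta\lesssim \alpha^{-1/(2-\delta)}+\alpha|\hat\xi-\hat\eta|^2$ yields the desired form directly. If $\hat s>0$, the parabolic Crandall--Ishii--Lions theorem produces triples $(a_1,p_1,m_1)\in\mathcal{P}_{\mathbb{R}}^{2,+}u(\hat s,\hat\xi)$ and $(a_2,p_2,m_2)\in\mathcal{P}_{\mathbb{R}}^{2,-}v(\hat s,\hat\eta)$ with $a_1-a_2=\lambda$, $p_1-p_2=O(\epsilon)$, and the matrix control $m_1-m_2\leq C\alpha + O(\epsilon)$. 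The obstacle dichotomy then splits the argument. In the \emph{obstacle-active} case $u(\hat s,\hat\xi)\leq h(\hat\xi)$, the supersolution inequality $v(\hat s,\hat\eta)\geq h(\hat\eta)$ gives $u(\hat s,\hat\xi)-v(\hat s,\hat\eta)\leq h(\hat\xi)-h(\hat\eta)$, which is controlled via the modulus of continuity of $h$ and $|\hat\xi-\hat\eta|\to 0$. In the \emph{PDE-active} case $a_1\leq\tfrac12\sigma^2(\hat s,\hat\xi)m_1$ and $a_2\geq\tfrac12\sigma^2(\hat s,\hat\eta)m_2$; subtracting,
\[
\lambda = a_1-a_2 \leq \tfrac12\bigl(\sigma^2(\hat s,\hat\xi)m_1 - \sigma^2(\hat s,\hat\eta)m_2\bigr) \leq C\sigma_{Lip}^2\,\alpha|\hat\xi-\hat\eta|^2 + O(\epsilon),
\]
where the Lipschitz continuity of $\sigma$ is essential to convert $\sigma^2(\hat s,\hat\xi)-\sigma^2(\hat s,\hat\eta)$ into a factor of $|\hat\xi-\hat\eta|^2$ that is absorbable by the matrix estimate. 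Sending $\epsilon\to 0$, letting $\lambda$ play the role of the error, and optimizing over $\alpha$ yields the claimed inequality; pushing $T^{\prime}\to T$ extends it to the full time interval. Consequence \ref{enu:comp} then follows by specializing to $x=y$ with $u_0\leq v_0$, and the spatial H\"older regularity of $u$ (when $u$ is itself a solution) by applying the estimate with $v:=u$.

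The main obstacle I anticipate is twofold. The unbounded domain forces the $\epsilon$-penalty, and one must verify carefully that the additional first- and second-order terms it introduces into the sub- and supersolution inequalities vanish uniformly as $\epsilon\to 0$ despite the obstacle interaction. More substantively, the obstacle-active branch is sharp only if $h$ has some quantitative modulus; in the applications of interest $h=u_\nu$ is Lipschitz by Proposition \ref{lemma:propertiespotentials}, which provides the required $O(|\hat\xi-\hat\eta|)$ bound, and this contribution is comfortably absorbed by the $\alpha^{-1/(2-\delta)}$ term after optimization. Assembling the three small parameters $\alpha,\lambda,\epsilon$ so that one extracts precisely $\inf_{\alpha>0}\bigl\{\alpha^{-1/(2-\delta)}+\alpha|x-y|^2\bigr\}$ without losing a factor is the technical core of the argument.
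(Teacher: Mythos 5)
Your overall scheme---doubling of variables with a spatial penalty, the parabolic Crandall--Ishii theorem of sums, and the dichotomy between the obstacle-active and PDE-active branches---is the paper's. There is, however, one genuine gap: the localizing weight $\epsilon\bigl(\sqrt{1+|\xi|^{2}}+\sqrt{1+|\eta|^{2}}\bigr)$ grows only \emph{linearly} at infinity, while the standing hypotheses allow $u(t,x)\leq C(1+|x|)$ and $-v(t,y)\leq C(1+|y|)$. Putting $\xi=\eta\to\infty$ gives $\Phi(s,\xi,\xi)=u(s,\xi)-v(s,\xi)-\lambda s-2\epsilon\sqrt{1+|\xi|^{2}}$, which need not be bounded above when $\epsilon<C$; since you must ultimately send $\epsilon\to 0$, the supremum of $\Phi$ need not be attained and the doubling argument cannot even begin. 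You need a strictly superlinear penalty. The paper takes $\epsilon(|x|^{2}+|y|^{2})$; that choice has constant Hessian, but because $\sigma$ has linear growth it introduces error terms of order $\epsilon\,\sigma^{2}(\hat x)\sim\epsilon|\hat x|^{2}$, and these are disposed of through the auxiliary Lemma~\ref{lem:space_penalty}, which shows $\epsilon\bigl(|\hat x_{\epsilon}|^{2}+|\hat y_{\epsilon}|^{2}\bigr)\to 0$. That lemma (or some substitute) is a step your outline should not omit.

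Two smaller remarks. The paper isolates the initial-slice contribution $m^{0}_{\alpha,\epsilon}=\sup\psi(0,\cdot,\cdot)^{+}$, bounds it by the H\"older regularity of $u_{0},v_{0}$, and forces $\hat t>0$ by replacing $u$ with $u-\overline{\epsilon}/(T-t)$; your $T'<T$ truncation together with the $\hat s=0$ case accomplishes the same thing, and the paper's $e^{\lambda t}\alpha$ factor plays the role of your additive $\lambda s$. And your worry about the obstacle branch is well placed: the paper's own estimate $u(\hat t,\hat x)-v(\hat t,\hat y)\leq|h|_{\delta}|\hat x-\hat y|^{\delta}$ uses a H\"older seminorm of $h$, even though the theorem as stated only assumes $h$ continuous with linear growth, so strictly speaking a modulus of continuity on $h$ belongs in the hypotheses; in the application $h=u_{\nu}$ is Lipschitz by Proposition~\ref{lemma:propertiespotentials}, so no harm is done.
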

\begin{proof}
Wlog we can replace the parabolic part in $F$ with $\partial_{t}w-\sigma^{2}\Delta w-w$
(by replacing $u$ resp.~$v$ with $e^{-t}u$ resp.~$e^{-t}v$).
Further we can assume that $\forall\overline{\epsilon}>0$, $u$ is
a subsolution of 
\begin{eqnarray}
F_{obs}\left(t,x,u,\partial_{t}u,Du,D^{2}u\right) & \leq & -\frac{\overline{\epsilon}}{\left(T-t\right)^{2}}\label{eq:t infty}\\
\lim_{t\uparrow T}u\left(t,x\right) & = & -\infty\text{ uniformly on }\mathcal{O}\nonumber 
\end{eqnarray}
(by replacing $u$ with $u-\frac{\overline{\epsilon}}{T-t}$). Define
for $\alpha>0$,$\epsilon>0$ 
\[
\psi\left(t,x,y\right)=u-v-\phi\left(t,x,y\right)\text{ with }\phi\left(t,x,y\right)=e^{\lambda t}\alpha\left|x-y\right|^{2}+\epsilon\left(\left|x\right|^{2}+\left|y\right|^{2}\right)
\]
and 
\[
m_{\alpha,\epsilon}^{0}=\sup_{\mathbb{R}\times\mathbb{R}}\psi\left(0,x,y\right)^{+}\text{ and }m_{\alpha,\epsilon}=\sup_{\left[0,T\right]\times\mathbb{R}\times\mathbb{R}}\psi\left(t,x,y\right)-m_{\alpha,\epsilon}^{0}.
\]
The growth assumptions on $u$ and $v$ together with (\ref{eq:t infty})
guarantee for every $\alpha>0$, $\epsilon>0$ the existence of a
triple $\left(\hat{t},\hat{x},\hat{y}\right)\in\left[0,T\right)\times\mathbb{R}\times\mathbb{R}$
s.t.~
\[
m_{\alpha,\epsilon}+m_{\alpha,\epsilon}^{0}=\psi\left(\hat{t},\hat{x},\hat{y}\right).
\]
The proof strategy is classic: the above implies that $\forall\alpha>0,\epsilon>0$
and $\forall\left(t,x,y\right)$ 
\begin{equation}
u\left(t,x\right)-v\left(t,y\right)\leq m_{\alpha,\epsilon}+m_{\alpha,\epsilon}^{0}+e^{\lambda t}\alpha\left|x-y\right|^{2}+\epsilon\left(\left|x\right|^{2}+\left|y\right|^{2}\right).\label{eq:u-v}
\end{equation}
Using the Hölder continuity of $u_{0}$ and $v_{0}$ we immediately
get an upper bound for $m_{\alpha,\epsilon}^{0}$ 
\begin{eqnarray*}
m_{\alpha,\epsilon}^{0} & \leq & u_{0}\left(\hat{x}\right)-v_{0}\left(\hat{x}\right)+\left|v_{0}\right|_{\delta}\left|\hat{x}-\hat{y}\right|^{\delta}-\alpha\left|\hat{x}-\hat{y}\right|^{2}\\
 & \leq & \left|u_{0}-v_{0}\right|+c\alpha^{-\frac{1}{2-\delta}}
\end{eqnarray*}
and below we use the parabolic theorem of sums to show that 
\begin{equation}
m_{\alpha,\epsilon}\leq C\alpha^{-\frac{1}{2-\delta}}+k\epsilon+\omega_{a}\left(\epsilon\right)\label{eq:m_a_e}
\end{equation}
where $\omega_{\alpha}\left(.\right)$ is a modulus of continuity
for every $\alpha>0$. Plugging these two estimates into (\ref{eq:u-v})
gives 
\begin{eqnarray*}
u\left(t,x\right)-v\left(t,y\right) & \leq & \left|u_{0}-v_{0}\right|+\left(c+C\right)\alpha^{-\frac{1}{2-\delta}}+e^{\lambda t}\alpha\left|x-y\right|^{2}+\epsilon\left(\left|x\right|^{2}+\left|y\right|^{2}\right).
\end{eqnarray*}
Now letting $\epsilon\rightarrow0$ and subsequently optimizing over
$\alpha$ yields the key estimate 
\begin{eqnarray*}
u\left(t,x\right)-v\left(t,y\right) & \leq & \left|u_{0}-v_{0}\right|+\inf_{\alpha>0}\left(\left(c+C\right)\alpha^{\frac{-1}{2-\delta}}+e^{\lambda t}\alpha\left|x-y\right|^{2}\right).
\end{eqnarray*}
Applying it with $x=y$ gives point (\ref{enu:comp}) of our statement.
Applying it with a viscosity solution $u=v$ gives 
\[
u\left(t,x\right)-u\left(t,y\right)\leq\inf_{\alpha>0}\left(\left(c+C\right)\alpha^{\frac{-1}{2-\delta}}+e^{\lambda t}\alpha\left|x-y\right|^{2}\right)
\]
and the estimate $\inf_{\alpha>0}\left\{ \alpha^{-\frac{1}{2-\delta}}+\alpha r^{2}\right\} =cr^{\delta}$
yields the $\delta$-Hölder regularity.

It remains to show (\ref{eq:m_a_e}). Below we assume $m_{\alpha,\epsilon}\geq0$
and derive the upper bound (\ref{eq:m_a_e}) (which then also holds
if $m_{\alpha,\epsilon}<0$). Note that $m_{\alpha,\epsilon}\geq0$
implies $\hat{t}>0$. The parabolic Theorem of sums \cite[Theorem 8.3]{MR1118699UserGuide}
shows existence of 
\[
\left(a,D_{x}\psi\left(\hat{t},\hat{x},\hat{y}\right),X\right)\in\mathcal{\overline{P}}_{\mathcal{O}}^{2,+}u\left(\hat{t},\hat{x}\right)\text{ and }\left(b,D_{y}\psi\left(\hat{t},\hat{x},\hat{y}\right),Y\right)\in\overline{\mathcal{P}}_{\mathcal{O}}^{2,-}v\left(\hat{t},\hat{x}\right)
\]
such that 
\begin{equation}
a-b=\dot{\psi}\left(\hat{t},\hat{x},\hat{y}\right)\text{ and }\left(\begin{array}{cc}
X & 0\\
0 & -Y
\end{array}\right)\leq ke^{\lambda t}\alpha\left(\begin{array}{cc}
1 & -1\\
-1 & 1
\end{array}\right)+k\epsilon\left(\begin{array}{cc}
1 & 0\\
0 & 1
\end{array}\right).\label{eq:TOS}
\end{equation}
Since $u$ is a subsolution resp\@. $v$ is a supersolution 
\begin{eqnarray*}
\min\left(a-\frac{\sigma^{2}\left(\hat{t},\hat{x}\right)}{2}X-u\left(\hat{t},\hat{x}\right),u\left(\hat{t},\hat{x}\right)-h\left(\hat{x}\right)\right) & \leq & 0\\
\min\left(b-\frac{\sigma^{2}\left(\hat{t},\hat{y}\right)}{2}Y-v\left(\hat{t},\hat{x}\right),v\left(\hat{t},\hat{x}\right)-h\left(\hat{x}\right)\right) & \geq & 0
\end{eqnarray*}
and subtracting the second inequality from the first leads to 
\[
\min\left(a-b-\frac{\sigma^{2}\left(t,x\right)}{2}X+\frac{\sigma\left(\hat{t},\hat{y}\right)}{2}Y-u\left(\hat{t},\hat{x}\right)+v\left(\hat{t},\hat{y}\right),u\left(\hat{t},\hat{x}\right)-v\left(\hat{t},\hat{y}\right)-h\left(\hat{x}\right)+h\left(\hat{y}\right)\right)\leq0.
\]
First assume the second term in the $\min$ is less than or equal
to $0$. This gives 
\[
u\left(\hat{t},\hat{x}\right)-v\left(\hat{t},\hat{y}\right)\leq h\left(\hat{x}\right)-h\left(\hat{y}\right)\leq\left|h\right|_{\delta}\left|\hat{x}-\hat{y}\right|^{\delta}
\]
hence we get the estimate 
\begin{equation}
m_{\alpha,\epsilon}\leq\left|h\right|_{\delta}\left|\hat{x}-\hat{y}\right|^{\delta}.\label{eq:case1}
\end{equation}
Now assume the first term in the $\min$ is less than or equal to
$0$. This gives
\[
\dot{\psi}\left(\hat{t},\hat{x},\hat{y}\right)+u\left(\hat{t},\hat{x}\right)-v\left(\hat{t},\hat{y}\right)\leq\frac{\sigma^{2}\left(t,x\right)}{2}X-\frac{\sigma\left(\hat{t},\hat{y}\right)}{2}Y
\]
hence from the definition of $\psi$ resp.~$m_{\alpha,\epsilon}$
it then follows that 
\[
\lambda e^{\lambda t}\alpha\left|\hat{x}-\hat{y}\right|^{2}+m_{\alpha,\epsilon}\leq\frac{1}{2}\left(\sigma^{2}\left(\hat{t},\hat{x}\right)X-\sigma^{2}\left(\hat{t},\hat{y}\right)Y\right).
\]
Estimate the rhs by multiplying the matrix inequality (\ref{eq:TOS})
from the left respectively right with the vector $\left(\sigma\left(\hat{t},\hat{x}\right),\sigma\left(\hat{t},\hat{y}\right)\right)$
resp. $\left(\sigma\left(\hat{t},\hat{x}\right),\sigma\left(\hat{t},\hat{y}\right)\right)^{t}$
to get 
\begin{eqnarray}
\lambda e^{\lambda t}\alpha\left|\hat{x}-\hat{y}\right|^{2}+m_{\alpha,\epsilon} & \leq & ke^{\lambda t}\alpha\left|\sigma\left(\hat{t},\hat{x}\right)-\sigma\left(\hat{t},\hat{y}\right)\right|^{2}+k\epsilon\left(\sigma^{2}\left(\hat{t},\hat{x}\right)+\sigma^{2}\left(\hat{t},\hat{y}\right)\right)\nonumber \\
 & \leq & ke^{\lambda t}\alpha\left|\sigma\right|_{1}\left|\hat{x}-\hat{y}\right|^{2}+k\epsilon\left(\sigma^{2}\left(\hat{t},\hat{x}\right)+\sigma^{2}\left(\hat{t},\hat{y}\right)\right).\label{eq:case2}
\end{eqnarray}
By adding (\ref{eq:case1}) and (\ref{eq:case2}) together and choosing
$\lambda=\left|\sigma\right|_{1}k+1$ we finally get 
\[
m_{\alpha,\epsilon}\leq\left|h\right|_{\delta}\left|\hat{x}-\hat{y}\right|^{\delta}-e^{\lambda t}\alpha\left|\hat{x}-\hat{y}\right|^{2}+k\epsilon\left(\sigma^{2}\left(\hat{t},\hat{x}\right)+\sigma^{2}\left(\hat{t},\hat{y}\right)\right).
\]
We estimate the sum of the first two terms on the rhs by using that
$\sup_{r\geq0}\left(r^{\delta}+\frac{\alpha}{2}r^{2}\right)\leq c\alpha^{-\frac{1}{2-\delta}}$
and the last term using linear growth of $\sigma^{2}$ to arrive at
\[
m_{\alpha,\epsilon}\leq C\alpha^{-\frac{1}{2-\delta}}+k\epsilon\left(1+\left|\hat{x}\right|^{2}+\left|\hat{y}\right|^{2}\right).
\]
By lemma (\ref{lem:space_penalty}) we can replace $\epsilon\left(\left|\hat{x}\right|^{2}+\left|\hat{y}\right|^{2}\right)$
by a modulus $\omega_{\alpha}\left(\epsilon\right)$ (i.e.~for every
$\alpha>0$, $\omega_{\alpha}\in C\left(\left[0,\infty\right),\left[0,\infty\right)\right)$,
$\omega_{\alpha}\left(0\right)=0$ and $\omega_{\alpha}$ is non-decreasing),
i.e.
\[
m_{\alpha,\epsilon}\leq C\alpha^{-\frac{1}{2-\delta}}+k\epsilon+\omega_{a}\left(\epsilon\right).
\]
Hence we have shown that $\forall\alpha>0$ 
\[
\limsup_{\epsilon}m_{\alpha,\epsilon}=C\alpha^{-\frac{1}{2-\delta}},
\]
and
\[
u\left(t,x\right)-v\left(t,y\right)\leq m_{\alpha,\epsilon}+m_{\alpha,\epsilon}^{0}+e^{\lambda t}\frac{\alpha}{2}\left|\hat{x}-\hat{y}\right|^{2}+\epsilon\left(\left|\hat{x}\right|^{2}+\left|\hat{y}\right|^{2}\right).
\]
\end{proof}
\begin{lem}
\label{lem:space_penalty}Let $f\in USC\left(\left[0,T\right]\times\mathbb{R}\times\mathbb{R}\right)$
and bounded from above. Set 
\begin{eqnarray*}
m & := & \sup_{\left[0,T\right]\times\mathbb{R}\times\mathbb{R}}f\left(t,x,y\right)\\
m_{\epsilon} & := & \sup_{\left[0,T\right]\times\mathbb{R}\times\mathbb{R}}f\left(t,x,y\right)-\epsilon\left(\left|x\right|^{2}+\left|y\right|^{2}\right)\text{ for }\epsilon>0.
\end{eqnarray*}
Denote with $\left(\hat{t}_{\epsilon},\hat{x}_{\epsilon},\hat{y}_{\epsilon}\right)$
points where the $sup$ is attained. Then
\begin{enumerate}
\item $\lim_{\epsilon\rightarrow0}m_{\epsilon}=\sup_{\left[0,T\right]\times\mathbb{R}\times\mathbb{R}}f\left(t,x,y\right)$,
\item $\epsilon\left(\left|\hat{x}_{\epsilon}\right|^{2}+\left|\hat{y}_{\epsilon}\right|^{2}\right)\rightarrow_{\epsilon}0$.
\end{enumerate}
\end{lem}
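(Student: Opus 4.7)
The plan is to prove both parts by combining monotonicity of $\epsilon \mapsto m_\epsilon$ with an approximate-maximizer argument, relying crucially on $f$ being bounded from above (so $m \in \mathbb{R}$). First I would observe that $f - \epsilon(|x|^2 + |y|^2) \leq f$ gives $m_\epsilon \leq m$, and that $\epsilon \mapsto m_\epsilon$ is non-decreasing as $\epsilon \downarrow 0$; hence $\ell := \lim_{\epsilon \downarrow 0} m_\epsilon$ exists in $(-\infty, m]$.

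For (1), I would fix an arbitrary $\delta > 0$ and choose a $\delta$-near maximizer $(t^*, x^*, y^*)$ of $f$ with $f(t^*, x^*, y^*) \geq m - \delta$; plugging this fixed point into the definition of $m_\epsilon$ gives $m_\epsilon \geq m - \delta - \epsilon(|x^*|^2 + |y^*|^2)$, which after $\epsilon \downarrow 0$ (the fixed point freezes $|x^*|^2 + |y^*|^2$) and then $\delta \downarrow 0$ yields $\ell \geq m$, hence $\ell = m$.

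For (2), I would evaluate the definition of $m_\epsilon$ at the maximizing triple and use $f(\hat t_\epsilon, \hat x_\epsilon, \hat y_\epsilon) \leq m$ to obtain
\[
m_\epsilon = f(\hat t_\epsilon, \hat x_\epsilon, \hat y_\epsilon) - \epsilon(|\hat x_\epsilon|^2 + |\hat y_\epsilon|^2) \leq m - \epsilon(|\hat x_\epsilon|^2 + |\hat y_\epsilon|^2),
\]
which rearranges to $\epsilon(|\hat x_\epsilon|^2 + |\hat y_\epsilon|^2) \leq m - m_\epsilon$; the right side tends to $0$ by (1), giving (2).

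There is no real obstacle here, as the statement is a standard penalization lemma and the proof is essentially formal. The only point worth a brief justification is the existence of the maximizing triple $(\hat t_\epsilon, \hat x_\epsilon, \hat y_\epsilon)$: I would note that since $f$ is bounded above by $M$, any maximizing sequence $(t_n, x_n, y_n)$ for $f(t,x,y) - \epsilon(|x|^2 + |y|^2)$ satisfies $\epsilon(|x_n|^2 + |y_n|^2) \leq M - m_\epsilon + o(1)$, so it stays in a compact subset of $[0,T] \times \mathbb{R}^2$; upper semicontinuity of $f$ then yields a limit point attaining the supremum.
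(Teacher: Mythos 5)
Your proof is correct and follows essentially the same approach as the paper: fix an approximate maximizer to show $m_\epsilon \to m$, then rearrange $m_\epsilon = f(\hat t_\epsilon,\hat x_\epsilon,\hat y_\epsilon) - \epsilon(|\hat x_\epsilon|^2+|\hat y_\epsilon|^2) \leq m - \epsilon(|\hat x_\epsilon|^2+|\hat y_\epsilon|^2)$. Your second step is in fact slightly slicker than the paper's, which passes to a convergent subsequence of $k_\epsilon := \epsilon(|\hat x_\epsilon|^2+|\hat y_\epsilon|^2)$ before concluding, whereas you bound $k_\epsilon \le m - m_\epsilon$ directly; and your remark on the existence of the maximizing triple via coercivity and upper semicontinuity fills a detail the paper leaves implicit.
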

\begin{proof}
By definition of a supremum there exists for every $\eta>0$ a triple
$\left(t_{\eta},x_{\eta},y_{\eta}\right)\in\left[0,T\right]\times\mathbb{R}\times\mathbb{R}$
such that $f\left(t_{\eta},x_{\eta},y_{\eta}\right)>m-\eta$. Fix
$\eta>0$ and take $\epsilon'$ small enough s.t.~$\epsilon'\left(\left|x_{\eta}\right|^{2}+\left|y_{\eta}\right|^{2}\right)\leq\eta$.
Then $\forall\epsilon\in\left[0,\epsilon'\right]$ we have 
\[
m\geq m_{\epsilon}\geq f\left(t_{\eta},x_{\eta},y_{\eta}\right)-\epsilon'\left(\left|x_{\eta}\right|^{2}+\left|y_{\eta}\right|^{2}\right)\geq f\left(t_{\eta},x_{\eta},y_{\eta}\right)-\eta\geq m-2\eta.
\]
Since $\eta$ can be arbitrary small and $\epsilon\mapsto m_{\epsilon}$
is non-increasing the first claim follows. From the above estimate
and the boundedness of $f$ from above also show that
\[
k_{\epsilon}=\epsilon\left(\left|\hat{x}_{\epsilon}\right|^{2}+\left|\hat{y}_{\epsilon}\right|^{2}\right)
\]
is bounded. Hence there exists a subsequence of $\left(k_{\epsilon}\right)_{\epsilon>0}$
which we denote with slight abuse of notation again as $\left(k_{\epsilon}\right)_{\epsilon>0}$
which converges to some limit denoted $k$($\geq0$). Now 
\[
f\left(\hat{t}_{\epsilon},\hat{x}_{\epsilon},\hat{y}_{\epsilon}\right)-k_{\epsilon}\leq m-k_{\epsilon}
\]
and from the first part we can send $\epsilon$ to $0$ along the
subsequence and see that $m-k\leq m$, hence $k=0$. Since we have
shown that every subsequence $\left(k_{\epsilon}\right)$ converges
to $0$ the second statement follows. 
\end{proof}

\subsection{A Lemma about sup- and superjets}

We now provide the proof of the Lemma that plays a crucial role in
the proof of Theorem (\ref{thm:root as pde}). It describes the elements
in the sub and superjets $\mathcal{P}_{\mathcal{O}}^{2,-}\left(u\right)$
and $\mathcal{P}_{\mathcal{O}}^{2,+}\left(u\right)$ for functions
which are only left- and right-differentiable.
\begin{lem}
\label{lem:jets for u-1}Let $v\in C\left(\left(0,\infty\right)\times\mathbb{R},\mathbb{R}\right)$
and assume that $\forall\left(t,x\right)\mathbb{\in}\left(0,\infty\right)\times\mathbb{R}$,
$v$ has a left- and right-derivative, i.e.~the following limits
exist
\[
\partial_{t+}v\left(t,x\right)=\lim_{\epsilon\searrow0}\frac{v\left(t+\epsilon,x\right)-v\left(t,x\right)}{\epsilon}\text{ and }\partial_{t-}v\left(t,x\right)=\lim_{\epsilon\nearrow0}\frac{v\left(t+\epsilon,x\right)-v\left(t,x\right)}{\epsilon},
\]
If $\partial_{t-}v\left(t,x\right)\leq\partial_{t+}v\left(t,x\right)$
then 
\[
a\in\left[\partial_{t-}v\left(t,x\right),\partial_{t+}v\left(t,x\right)\right]\,\,\,\forall\left(a,p,m\right)\in\mathcal{P}_{\mathcal{O}}^{2,-}v\left(t,x\right).
\]
If $\partial_{t-}v\left(t,x\right)<\partial_{t+}v\left(t,x\right)$
then $\mathcal{P}_{\mathcal{O}}^{2,+}v\left(t,x\right)=\emptyset$
and if $\partial_{t-}v\left(t,x\right)=\partial_{t+}v\left(t,x\right)$
then $\forall\left(a,p,m\right)\in\mathcal{P}_{\mathcal{O}}^{2,+}v\left(t,x\right)$,
$a=\partial_{t}v\left(t,x\right)$ ($=\partial_{t-}v\left(t,x\right)=\partial_{t+}v\left(t,x\right)$).
In all the above cases, if $v$ is additionally twice continuously
differentiable in space then 
\begin{eqnarray*}
m & \leq & \Delta v\left(t,x\right)\,\,\,\forall\left(a,p,m\right)\in\mathcal{P}_{\mathcal{O}}^{2,-}v\left(t,x\right),\\
m & \geq & \Delta v\left(t,x\right)\,\,\,\forall\left(a,p,m\right)\in\mathcal{P}_{\mathcal{O}}^{2,+}v\left(t,x\right).
\end{eqnarray*}
\end{lem}
\begin{proof}
Every element $\left(a,p,m\right)\in\mathcal{P}_{\mathcal{O}}^{2,-}v\left(t,x\right)$
fulfills 
\[
v\left(t+\epsilon,x\right)-v\left(t,x\right)\geq a\epsilon+o\left(\epsilon\right)\,\,\,\epsilon\rightarrow0.
\]
Applied with a sequence $\epsilon^{n}\nearrow0,$ it follows after
dividing by $\epsilon^{n}$ and letting $n\rightarrow\infty$ that
$\partial_{t-}v\left(t,x\right)\leq a$. If $\partial_{t-}v\left(t,x\right)<\partial_{t+}v\left(t,x\right)$
then for $\left(a,p,m\right)\in\mathcal{P}_{\mathcal{O}}^{2,+}v\left(t,x\right)$
we have 
\[
v\left(t+\epsilon,x\right)-v\left(t,x\right)\leq a\epsilon+o\left(\epsilon\right)\,\,\,\epsilon\rightarrow0
\]
which leads after taking $\epsilon^{n}\nearrow0$ resp.~$\epsilon^{n}\searrow0$
to 
\[
\partial_{t+}v\left(t,x\right)\leq a\leq\partial_{t-}v\left(t,x\right)
\]
and hence contradicts the assumption $\partial_{t-}v\left(t,x\right)<\partial_{t+}v\left(t,x\right)$.
The other statements follow similarly.\end{proof}
\begin{acknowledgement}
PG is grateful for partial support
from the European Research Council under the European Union's Seventh
Framework Programme (FP7/2007--2013)/ERC grant agreement nr.~258237.
and nr.~291244. 
 HO is grateful for partial support
from the European Research Council under the European Union's Seventh
Framework Programme (FP7/2007--2013)/ERC grant agreements nr.~258237
and nr.~291244. 
GdR is also affiliated with CMA/FCT/UNL, 2829-516 Caparica, Portugal.
GdR acknowledges partial support by the \emph{Fundação para a Ciência
e a Tecnologia} (Portuguese Foundation for Science and Technology)
through PEst-OE/MAT/UI0297/2011 and PEst-OE/MAT/UI0297/2014 (CMA -
Centro de Matemática e Aplicações).
\begin{acknowledgement}
The authors would like to thank Alexander Cox, Martin Keller--Ressel,
Jan Ob\l{}ój and Johannes Ruf for helpful conversations.
\end{acknowledgement}
\end{acknowledgement}
\bibliographystyle{plain}
\bibliography{/home/hd/Dropbox/projects/BIBTEX/root,/home/hd/Dropbox/projects/BIBTEX/roughpaths}

\end{document}